\numberwithin{equation}{section}
\newtheorem{theorem}{Theorem}[section]
\newtheorem{proposition}[theorem]{Proposition}
\newtheorem{lemma}[theorem]{Lemma}
\newtheorem*{theorem*}{Theorem}
\theoremstyle{definition}
\newtheorem{example}[theorem]{Example}
\newtheorem{remark}[theorem]{Remark}
\renewcommand\P{\mathbb P}
\newcommand\adim{\operatorname{adim}}
\newcommand\edim{\operatorname{edim}}
\newcommand\vdim{\operatorname{vdim}}
\definecolor{MyDarkGreen}{cmyk}{1,0,1,0}
\def\cocoa{{\hbox{\rm C\kern-.13em o\kern-.07em C\kern-.13em o\kern-.15em A}}}
\begin{document}

\title{New constructions of unexpected hypersurfaces in $\P^n$}


\author{Brian Harbourne}
\address{Department of Mathematics\\
University of Nebraska\\
Lincoln, NE 68588-0130 USA}
\email{bharbourne1@unl.edu}

\author{Juan Migliore} 
\address{Department of Mathematics \\
University of Notre Dame \\
Notre Dame, IN 46556 USA}
 \email{migliore.1@nd.edu}

\author{Halszka Tutaj-Gasi\'nska}
\address{Faculty of Mathematics and Computer Science\\
Jagiellonian University\\
{\L}ojasiewicza 6, PL-30-348 Krak\'ow, Poland}
\email{Halszka.Tutaj@im.uj.edu.pl}

\begin{abstract} 
In the paper we present new examples of unexpected varieties. The research on unexpected varieties started 
with a paper of Cook II,  Harbourne, Migliore and Nagel and was
continued in the paper of  Harbourne, Migliore,  Nagel and Teitler. Here we present three ways of 
producing unexpected varieties that expand on what was previously known.
In  the paper of  Harbourne, Migliore,  Nagel and Teitler,
cones on varieties of codimension 2 were used to produce unexpected hypersurfaces.
Here we show that cones on positive dimensional varieties of codimension 2 or more
almost always give unexpected hypersurfaces. For non-cones, almost all previous
work has been for unexpected hypersurfaces coming from finite sets of points.
Here we construct unexpected surfaces coming from lines in $\P^3$,
and we generalize the construction using birational transformations
to obtain unexpected hypersurfaces in higher dimensions.
\end{abstract}

\date{April 5, 2019}

\thanks{
{\bf Acknowledgements}: Harbourne was partially supported by Simons Foundation grant \#524858.
Migliore was partially supported by Simons Foundation grant \#309556.
Tutaj-Gasi\'nska was partially supported by National Science Centre grant 2017/26/M/ST1/00707.
Harbourne and Tutaj-Gasi\'nska thank the Pedagogical University of Cracow, the Jagiellonian University 
and the University of Nebraska for hosting reciprocal visits by Harbourne and Tutaj-Gasi\'nska
when some of the work on this paper was done.}

\keywords{Cones, fat flats, special linear systems, line arrangements, unexpected varieties, base loci}

\subjclass[2010]{14N20 (primary); 13D02, 14C20, 14N05, 05E40,  14F05 (secondary)}

\maketitle



\section{Introduction}
We work over an arbitrary algebraically closed field $K$ except for results that depend
on computer calculations, and for these we assume characteristic 0. The notion of an unexpected variety was introduced in \cite{CHMN16}.
That paper, using the results of \cite{DIV}, produced an example of a quartic curve in $\P^2$
which passes through a certain special set $B$ of nine points imposing independent conditions on quartics. 
Thus the space of quartics passing through $B$ has (affine) dimension 6; however, for any general point $P$ 
on $\P^2$, there exists a quartic passing through $B$ and vanishing at $P$ with multiplicity 3. 
This is unexpected since vanishing at a triple point typically imposes 6 conditions, so there should be no
quartic vanishing on $B$ and triply at $P$. 

In general, let $R=K[\P^n]=K[x_0,\ldots,x_n]$ denote the homogeneous coordinate ring of $\P^n$; it is a polynomial ring
in $n+1$ indeterminates with the standard grading in which each variable $x_i$ has degree 1. Then,
given a scheme $Z\subseteq \P^n$, we denote by $I_Z\subset R$ the saturated homogeneous ideal defining $Z$. 
Now let $\lambda_i\subset \P^n$, $1\leq i\leq r$, be general linear varieties and let $\delta_i=\dim \lambda_i$.
For integers $m_i\geq0$, $X=m_1\lambda_1+\cdots+m_r\lambda_r$ denotes the scheme defined by the
ideal $I_X=\cap_i I_{\lambda_i}^{m_i}\subseteq R$. 

We denote the Hilbert polynomial of $R/I_X$ by $H_X$.
We denote by $[R]_t$ the $K$-vector space span of all forms in $R$ of degree $t$.
Given a homogeneous ideal $I\subseteq R$ we denote by $[I]_t$ the sub-$K$-vector space
spanned by all forms in $I$ of degree $t$. Thus $H_X(t)=\dim [R]_t-\dim [I_X]_t$.
Given a linear variety $\lambda\subseteq \P^n$ of dimension $\delta$,
we set $c_{n,\delta,m,t}= H_{m\lambda}(t)$ and say that vanishing on $\lambda$ to order $m$ imposes
$c_{n,\delta,m,t}$ conditions on forms of degree $t$. If $\lambda_1,\ldots,\lambda_r$ are pairwise disjoint,
then $H_X(t)=\sum_ic_{n,\delta_i,m_i,t}$.

Now let $Z\subseteq \P^n$ be a scheme and let $X=\sum_i m_i\lambda_i$
where $\lambda_i\subset \P^n$, $1\leq i\leq r$, are general linear varieties
of dimension $\delta_i$ which are disjoint from $Z$ and where each $m_i>0$ is an integer. 
We say $(n,X,Z,t)$ (or just $(X,Z,t)$ if $n$ is understood) is {\it unexpected} if
$$\dim [I_{X\cup Z}]_t > \max(0, \dim [I_Z]_t - H_X(t));$$
i.e., $(n,X,Z,t)$ is unexpected if vanishing on $X$ imposes on $[I_Z]_t$ fewer than the expected
number of conditions (namely, $H_X(t)$). 
We refer to the varieties defined by the forms $[I_{X\cup Z}]_t$ as being {\it unexpected hypersurfaces}
for $Z$ (with respect to $X$ in degree $t$). 
In the example above of plane quartics vanishing
on $B$ with a general triple point at a general point $P$, we thus have $(2,3P,B,4)$ being unexpected
and we say that the quartics vanishing on $B$ with a general triple point are unexpected curves for $B$.

In \cite{CHMN16} the focus was on unexpectedness of $(n,X,Z,t)$ where $n=2$, $Z\subset\P^2$ 
was a finite reduced set of points and $X=mP$ for a general point $P\in\P^2$ with $m=t-1$.
Since \cite{CHMN16}, more papers on unexpected varieties have appeared, including \cite{BMSS}, \cite{HMNT} and 
\cite{DHRST18}, among others: for \cite{BMSS}, $(n,X,Z,t)$ has $2\leq n\leq 3$ with $X=mP$, $m=t-1$ and $Z$ a finite 
reduced set of points; for \cite{HMNT}, $(n,X,Z,t)$ has $n$ arbitrary, $X=mP$ for a general point $P$ with $t\geq m$
and $Z$ a reduced set of points; and for \cite{DHRST18}, $(n,X,Z,t)$ has $n=3$, $X=\sum m_i\lambda_i$ 
where the $\lambda_i$ are general lines and $Z=\varnothing$.
In \cite{HMNT} in particular, a method of producing unexpected surfaces in $\P^n$ is 
described which is based on building cones over codimension 2 subvarieties in $\P^n$. The paper \cite{DHRST18} considers 
four surfaces in $\P^3$ vanishing on lines with multiplicities; these surfaces, by dimension count, should not exist, 
but \cite{DHRST18} shows they do exist and are unique and irreducible.

The present paper builds on the developments of \cite{HMNT} and \cite{DHRST18}. 
In \S\ref{Cones} we generalize the cone method of \cite{HMNT} to obtain unexpected varieties in 
$\P^n$ where $X=m\lambda$ for a positive dimensional linear variety $\lambda$; here the unexpected varieties
are cones with apex $\lambda$.
(Given an equidimensional variety $V\subset\P^n$ of dimension $e$ and a disjoint linear variety
$\lambda$ of dimension $n-e-2$, recall that the cone $C_\lambda(V)$ over $V$ with apex (or vertex or axis)
$\lambda$ is the union of all lines through a point of $\lambda$ and a point of $V$.
An easy argument using projection from $\lambda$ to a complementary linear subvariety shows that $C_\lambda(V)$
is Zariski closed; see, for example, the proof of Lemma \ref{lemma2}.) The main theorem in \S\ref{Cones} is 
Theorem \ref{MainThmSect2}, which is as follows:
 
\begin{theorem}
Let $V \subset \P^n$ be a reduced, equidimensional, non-degenerate subvariety of dimension $e$ ($1 \leq e \leq n-2$) and 
degree $d$. Let $\lambda$ be a general linear space of dimension $n-2-e$ (i.e., of codimension $e+2$). 
Then $C_\lambda(V)$ is the unique hypersurface of degree $d$ vanishing to order $d$ on $\lambda$ and containing 
$V$, and $(n,d\lambda,V,d)$ is unexpected.
\end{theorem}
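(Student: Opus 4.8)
The plan is to reduce everything to the linear projection from $\lambda$. After a change of coordinates I may assume $\lambda=V(x_0,\dots,x_{e+1})$, so that $I_\lambda=(x_0,\dots,x_{e+1})$ and $[I_{d\lambda}]_d=[I_\lambda^d]_d$ consists precisely of the forms of degree $d$ in the variables $x_0,\dots,x_{e+1}$ alone; these are exactly the degree-$d$ equations of cones with apex $\lambda$. Since $\lambda$ is general of dimension $n-e-2$ it is disjoint from $V$, so projection from $\lambda$ is a finite morphism $\pi\colon V\to\P^{e+1}$, and for general $\lambda$ it is birational onto its image $\overline V=\pi(V)$. Thus $\overline V$ is a hypersurface in $\P^{e+1}$ of degree $\deg\overline V=\deg V=d$, cut out by a single form $F\in K[x_0,\dots,x_{e+1}]_d$. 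Viewing $F\in[R]_d$, it is a cone form vanishing on $V$ whose zero locus is $C_\lambda(V)$, and $F\in[I_\lambda^d]_d$; hence $C_\lambda(V)$ has degree $d$, vanishes to order $d$ on $\lambda$, and contains $V$. This settles existence.

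For uniqueness, let $G\in[I_{d\lambda\cup V}]_d$ be arbitrary. First I would observe that vanishing to order $d$ on $\lambda$ forces $G\in[I_\lambda^d]_d$, i.e.\ $G$ is a cone form in $x_0,\dots,x_{e+1}$. A cone form vanishes on $V$ if and only if it vanishes on $\overline V$, so $G\in[I_{\overline V}]_d$. Because $\overline V$ is a hypersurface of degree exactly $d$, the ideal $I_{\overline V}$ is principal and generated in degree $d$ by $F$, whence $[I_{\overline V}]_d=\langle F\rangle$ is one-dimensional and $G$ is a scalar multiple of $F$. Therefore $C_\lambda(V)$ is the unique hypersurface of the asserted type and $\dim[I_{d\lambda\cup V}]_d=1$. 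Here it is essential that the projection be birational, so that $\deg\overline V=d$ and no lower-degree equation exists; this is exactly what generality of $\lambda$ buys.

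It remains to prove unexpectedness, i.e.\ that $1=\dim[I_{d\lambda\cup V}]_d>\max\bigl(0,\dim[I_V]_d-H_{d\lambda}(d)\bigr)$. Since $[I_{d\lambda}]_d$ is the span of the degree-$d$ cone forms, one has $H_{d\lambda}(d)=\binom{n+d}{d}-\binom{e+1+d}{d}$, and substituting $H_V(d)=\binom{n+d}{d}-\dim[I_V]_d$ gives $\dim[I_V]_d-H_{d\lambda}(d)=\binom{e+1+d}{d}-H_V(d)$. Thus unexpectedness is equivalent to the single inequality $H_V(d)\ge\binom{e+1+d}{d}$, or dually to the statement that the cone forms together with $[I_V]_d$ fail to span $[R]_d$, i.e.\ $[I_{d\lambda}]_d+[I_V]_d\neq[R]_d$.

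The main obstacle is this last inequality. Restriction of cone forms gives an inclusion $K[x_0,\dots,x_{e+1}]/I_{\overline V}\hookrightarrow R/I_V$, so $H_V(d)\ge H_{\overline V}(d)=\binom{e+1+d}{d}-1$ for free; the whole difficulty is to extract the remaining ``$+1$'', which must come from non-degeneracy. The plan here is to show that the extra $n-e-1$ coordinate directions produce at least one degree-$d$ function on $V$ that is not the restriction of any cone form, equivalently that the map $[K[x_0,\dots,x_{e+1}]]_d\to[R/I_V]_d$ is not surjective. In degree $1$ this is immediate, since non-degeneracy gives $H_V(1)=n+1>e+2$; the real work is to propagate a strict inequality all the way to the critical degree $t=d$. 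I would do this by induction on $e$ via general hyperplane sections: for general $\ell$ the form $\ell$ is a nonzerodivisor on the reduced equidimensional ring $R/I_V$, giving $H_V(t)\ge H_V(t-1)+H_{V\cap H}(t)$, while $V\cap H$ is again reduced, equidimensional and non-degenerate of dimension $e-1$ and degree $d$. Positive dimensionality ($e\ge1$) is used precisely to keep the sections non-degenerate long enough for the bound to reach degree $d$; the base case reduces to a statement about non-degenerate curves and their general point-sections in uniform position, which is where I expect the estimate to be most delicate.
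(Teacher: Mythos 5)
Your existence and uniqueness argument is correct, and it is essentially an algebraic repackaging of what the paper does geometrically: your observation that $[I_{d\lambda}]_d$ consists exactly of the degree-$d$ forms in $x_0,\dots,x_{e+1}$, so that $[I_{d\lambda\cup V}]_d=[I_{\overline V}]_d=\langle F\rangle$ is one-dimensional, replaces the paper's Lemmas \ref{lemma2} and \ref{MultLemma} together with its ``any such hypersurface contains every line of the cone'' argument. Your reduction of unexpectedness to the single inequality $H_V(d)\ge\binom{d+e+1}{e+1}$ (Hilbert function of $V$ in degree $d$) is also correct and matches the paper's target $\dim[I_V]_d\le\binom{d+n}{n}-\binom{d+e+1}{e+1}$.

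The problem is that this inequality is the entire content of the theorem, and for it you offer a plan rather than a proof, with two concrete holes. First, the inductive step is circular as stated: from $H_V(d)\ge H_V(d-1)+H_W(d)$ and the inductive hypothesis $H_W(d)\ge\binom{d+e}{e}$ for the hyperplane section $W$, what you still need is $H_V(d-1)\ge\binom{d+e}{e+1}$, which is an inequality of exactly the same shape for $V$ itself one degree lower; saying you will ``propagate a strict inequality all the way to the critical degree'' does not supply it. The paper closes this loop with a geometric trick you do not have: for any $f\le d-1$, a degree-$f$ form in $I_V$ vanishing to order $f$ on $\lambda$ would define a hypersurface containing every line from $\lambda$ to $V$, hence containing the degree-$d$ cone $C_\lambda(V)$, which is impossible; therefore $[I_V]_f\cap[I_{f\lambda}]_f=0$, and since $\dim[I_{f\lambda}]_f=\binom{f+e+1}{e+1}$ this yields precisely the missing lower-degree bound. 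Second, you explicitly leave the base case $e=1$ open (``where I expect the estimate to be most delicate''), but that is where the real input lies: the paper's Proposition \ref{curve} invokes two substantial theorems there, the Gruson--Lazarsfeld--Peskine surjectivity of $H^0(\mathcal O_{\P^n}(d))\to H^0(\mathcal O_C(d))$ \cite{GLP} and the Castelnuovo-type genus bound $g_C<g_D$ comparing $C$ with a plane curve of the same degree (proof of \cite[Proposition 2.1]{HMNT}). Note also that the uniform-position route you gesture at for the curve case cannot work as stated, because $V$ is only assumed reduced and equidimensional, not irreducible, and general hyperplane sections of reducible curves need not be in linearly general position. Until these two points are filled, the unexpectedness assertion is unproven.
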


In \S\ref{LinesInP3} we introduce a notion $u$ of unexpectedness, measuring how much the actual dimension 
of a system (of forms of degree $d$ vanishing on a given subscheme $Z$) differs from the virtual dimension. 
We prove the following theorem.

\begin{theorem}\label{Thm2}
Let $L_1,\ldots,L_r,L_1',\ldots,L_s'$ be general lines in $\P^3$.
Let $X=m_1L_1+\cdots+m_rL_r$ and $Z'=m_1'L_1'+\cdots+m_s'L_s'$.
If $u(X+Z',\varnothing,t)>u(Z',\varnothing,t)$, then 
$(X,Z',t)$ is unexpected (i.e., $u(X,Z',t)>0$), in which case so is $(X,Z,t)$,
where $Z$ is obtained from $Z'$ by, for each $i$, either keeping $m_i'L_i'$ as is or replacing it by
a fat point subscheme consisting of any $s_i\geq t-m_i'+2$
points of $L_i'$, each having an assigned multiplicity of at most $m_i'$ but where
at least $t-m_i'+2$ of the $s_i$ points on $L_i'$ have multiplicity exactly $m_i'$.
\end{theorem}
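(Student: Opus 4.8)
The plan is to establish the two implications in turn. Write $A=\dim[I_{X\cup Z'}]_t$, $B=\dim[I_{Z'}]_t$ and $N=\dim[R]_t$, and recall that $u$ compares the true dimension of a system to its virtual dimension, so that $u(W,\varnothing,t)=\dim[I_W]_t-\max(0,N-H_W(t))$ and $u(X,Z',t)=A-\max(0,B-H_X(t))$, the latter being positive exactly when $(X,Z',t)$ is unexpected.

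For the first implication the essential structural fact is that the lines are general, hence pairwise disjoint; thus $X\cup Z'=X+Z'$, and the Hilbert polynomials of disjoint fat flats add, giving $H_{X+Z'}(t)=H_X(t)+H_{Z'}(t)$. I would also note $u(Z',\varnothing,t)\ge 0$: a union of subschemes imposes at most the sum of the conditions imposed by each, so $\operatorname{codim}[I_{Z'}]_t\le H_{Z'}(t)$, i.e. $B\ge\max(0,N-H_{Z'}(t))$. Granting this, the claim is the superadditivity bound
\[
u(X,Z',t)\ \ge\ u(X+Z',\varnothing,t)-u(Z',\varnothing,t).
\]
Using $\dim[I_{X+Z'}]_t=A$ and $H_{X+Z'}=H_X+H_{Z'}$, this reduces to the numerical inequality
\[
B+\max(0,\,N-H_X-H_{Z'})-\max(0,\,N-H_{Z'})\ \ge\ \max(0,\,B-H_X),
\]
which I would verify by a short case analysis on the signs of $N-H_{Z'}$ and $B-H_X$; the one dangerous case, $N-H_{Z'}>H_X$ together with $B<H_X$, cannot occur since then $B\ge\max(0,N-H_{Z'})>H_X$. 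The hypothesis $u(X+Z',\varnothing,t)>u(Z',\varnothing,t)$ then forces $u(X,Z',t)>0$, which is unexpectedness of $(X,Z',t)$.

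For the second implication everything rests on a restriction-to-the-line lemma: in degree $t$, a form vanishing to order $m_i'$ at $s_i\ge t-m_i'+2$ points of $L_i'$ must vanish to order $m_i'$ along all of $L_i'$. To prove it I would choose coordinates on $\P^3$ with $L_i'=\{x_2=x_3=0\}$ and expand any $F\in[R]_t$ by transverse order,
\[
F=\sum_{k\ge 0}\ \sum_{c+d=k} x_2^{\,c}x_3^{\,d}\,h_{c,d}(x_0,x_1),
\]
where each $h_{c,d}$ is a binary form of degree $t-k$. Vanishing of $F$ to order $m_i'$ along $L_i'$ means precisely $h_{c,d}=0$ for all $c+d<m_i'$, while vanishing of $F$ to order $m_i'$ at a point $P\in L_i'$ translates into $h_{c,d}$ vanishing to order $m_i'-(c+d)$ at $P$. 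Since a nonzero binary form of degree $t-k$ has at most $t-k$ zeros counted with multiplicity, the $s_i$ full-multiplicity points force $h_{c,d}\equiv 0$ as soon as $s_i(m_i'-k)>t-k$; this is decreasing in $k$, so the binding case is $k=m_i'-1$, which is exactly $s_i\ge t-m_i'+2$. The points of multiplicity $<m_i'$ are irrelevant to the forcing and are satisfied automatically once $F$ vanishes to order $m_i'$ along the line.

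With the lemma the conclusion is immediate. From $Z\subseteq Z'$ we get $[I_{Z'}]_t\subseteq[I_Z]_t$, and the lemma gives the reverse inclusion in degree $t$, so $[I_Z]_t=[I_{Z'}]_t$; applied to forms that also vanish on $X$, it yields $[I_{X\cup Z}]_t=[I_{X\cup Z'}]_t$. As $H_X(t)$ depends only on $X$, the inequality defining unexpectedness of $(X,Z,t)$ coincides verbatim with that for $(X,Z',t)$, so one holds if and only if the other does. I expect the main obstacle to be the restriction lemma: organizing the transverse-order expansion and pinning down that the binary-form root count is sharp precisely at $k=m_i'-1$ is the real content, whereas the first implication is, after the additivity of Hilbert polynomials, the elementary inequality above.
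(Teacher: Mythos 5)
Your treatment of the second implication is fine, but your proof of the first implication does not establish the theorem as stated, because you have recalled the definition of $u$ incorrectly, and the error is consequential. The paper defines $u(X,Z,t)$ to be $0$ when $\adim(X,Z,t)=0$ and $\adim(X,Z,t)-\vdim(X,Z,t)$ otherwise; you use $u=\adim-\max(0,\vdim)=\adim-\edim$ instead. The two agree in sign, so ``$u>0$'' still characterizes unexpectedness, but the hypothesis of the theorem compares the \emph{values} of $u$ for two different schemes, and precisely in the regime where unexpectedness lives one has $\vdim<0$, where the two definitions differ by $|\vdim|$. Your hypothesis is therefore a different and, in the relevant cases, strictly stronger condition. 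Concretely, in the paper's own application (Example \ref{NiceLineExample}) one has $X=X_1(1)$, $Z'=X_1(6^{\times5})$, $t=20$, with $\adim(X+Z',\varnothing,20)=1$, $\vdim(X+Z',\varnothing,20)=-105$, $6\le\adim(Z',\varnothing,20)\le16$, $\vdim(Z',\varnothing,20)=-84$: the paper's hypothesis is $106>u(Z',\varnothing,20)$ with $90\le u(Z',\varnothing,20)\le100$, which holds, whereas yours is $1>\adim(Z',\varnothing,20)\ge6$, which fails; so your version does not cover the cases the theorem was built for. Worse, with your definition the statement is actually false in degenerate cases: for $Z'=10L'$, $X=L_1+\cdots+L_{10}$ (simple lines), $t=5$, one has $A=B=0$, $N=56$, $H_{Z'}(5)=c_{3,1,10,5}=0$, $H_X(5)=60$, so your $u(X+Z',\varnothing,5)=0>-56=u(Z',\varnothing,5)$, yet $(X,Z',5)$ is not unexpected since $\adim=0$. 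This is also exactly where your auxiliary claim $B\ge\max(0,N-H_{Z'}(t))$ breaks down: it is a Hilbert-function-versus-Hilbert-polynomial comparison, valid for $t\ge\max_i m_i'$ but false below that, so your case analysis silently assumes what fails in the counterexample. With the paper's definition none of this arises: $u\ge0$ always, so the hypothesis forces $u(X+Z',\varnothing,t)>0$, hence $\adim(X+Z',\varnothing,t)>0$ (which in turn puts $t$ above all multiplicities), and then, since the general lines are disjoint so that $H_{X+Z'}=H_X+H_{Z'}$, one has the \emph{exact identity} $u(X,Z',t)=u(X+Z',\varnothing,t)-u(Z',\varnothing,t)$, with no maxima and no case analysis; the conclusion is immediate. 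That identity, not a one-sided superadditivity bound, is the paper's entire first step, and your argument would collapse to it once you restore the correct $u$.

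The second half of your proposal is correct and is a genuinely different proof of the paper's key lemma (Proposition \ref{BezoutTypeProp}). The paper restricts $F$ to a general plane $H\supset L$ and runs a Bezout-type induction: $F|_H$ has at least $sm>t$ zeros on $L$, hence contains $L$; divide off the line and repeat until the multiplicity is exhausted. You instead expand $F$ in the two variables transverse to $L$ and reduce everything to root counts for the binary forms $h_{c,d}$, correctly identifying $k=m_i'-1$ as the binding case, which is exactly where the threshold $s_i\ge t-m_i'+2$ comes from; this has the small merit of making the sharpness of the threshold transparent. Your deduction from the lemma --- $[I_Z]_t=[I_{Z'}]_t$, hence $[I_{X\cup Z}]_t=[I_{X\cup Z'}]_t$ and the two unexpectedness statements coincide --- matches the paper's.
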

\noindent

We apply Theorem \ref{Thm2} using the results of \cite{DHRST18} to give examples of unexpected surfaces with multiple lines in $\P^3$.

In \S\ref{Veneroni}, the last section of the paper, we use certain birational transformations 
of $\P^n$, called Veneroni transformations,
to construct examples of unexpected hypersurfaces in $\P^n$ for $n=3,4,5$.
For $n=4$ and $5$, the linear spaces of which $X$ is composed are not disjoint, in contrast to
our constructions in sections 2 and 3.

\section{Cones}\label{Cones}
Throughout this section we denote by $V \subset \P^n$ a reduced, equidimensional, non-degenerate subvariety of dimension 
$e$ ($1 \leq e \leq n-2$) and degree $d$ in $\P^n$. Furthermore, we denote by $\lambda$ a general linear space of dimension $n-e-2$ (i.e. of codimension $e+2$). Experiments leading to the results in this section were obtained using CoCoA \cite{CoCoA}.

\begin{lemma}[\cite{DHST14} Lemma A.2(C)] \label{adv}
Let $L$ be a linear subspace of $\P^n$ of dimension $\delta<n$. Let $t \geq m$ be positive integers. Then vanishing to order at least $m$ along $L$ imposes exactly
\[
c_{n,\delta,m,t} = \sum_{0 \leq i < m} \binom{t-i+\delta}{\delta} \binom{i+n-\delta-1}{n-\delta-1}
\]
linearly independent conditions on forms of degree $t$. When $m=t$, this gives
\[
c_{n,\delta,m,m} = \binom{m+n}{n} - \binom{m+n-\delta-1}{n-\delta-1},
\]
while for $\delta=0$ it gives
$$c_{n,0,m,t}=\binom{n+m-1}{n}$$ 
and for $\delta=1$ it gives
$$c_{n,1,m,t}=(t+1)\binom{m+n-2}{n-1}-(n-1)\binom{m+n-2}{n}.$$ 
\end{lemma}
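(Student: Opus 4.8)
The plan is to prove a counting formula for the number of linearly independent conditions imposed by vanishing to order $m$ along a $\delta$-dimensional linear subspace $L\subset\P^n$. Since the statement already cites \cite{DHST14} Lemma A.2(C), my task is really to reconstruct that argument. The cleanest approach is to reduce to the case where $L$ is a coordinate subspace (since the number of conditions is invariant under the action of $PGL_{n+1}$, we may take $L = V(x_{\delta+1},\ldots,x_n)$), and then to identify the conditions explicitly in terms of monomials.

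First I would set up the local description of vanishing to order $m$ along $L$. With $L$ defined by the vanishing of the $n-\delta$ variables $x_{\delta+1},\ldots,x_n$, a form $F$ of degree $t$ vanishes to order at least $m$ along $L$ precisely when every monomial appearing in $F$ has degree at least $m$ in the variables $x_{\delta+1},\ldots,x_n$. Equivalently, $F \in \mathfrak{p}^m$ where $\mathfrak{p} = (x_{\delta+1},\ldots,x_n)$ is the prime ideal of $L$; because $\mathfrak{p}$ is generated by a regular sequence of linear forms, its symbolic and ordinary powers coincide, so the condition ``order at least $m$ along $L$'' is exactly membership in $[\mathfrak{p}^m]_t$. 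The number of conditions is then $\dim[R]_t - \dim[\mathfrak{p}^m]_t$, and the monomials \emph{not} in $\mathfrak{p}^m$ of degree $t$ are exactly those with $x_{\delta+1},\ldots,x_n$-degree equal to some $i$ with $0\le i<m$.

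The key computation is to count these monomials by splitting each monomial into its ``$L$-part'' (exponents on $x_0,\ldots,x_\delta$) and its ``transverse part'' (exponents on $x_{\delta+1},\ldots,x_n$). For a fixed transverse degree $i$ with $0\le i<m$, the number of transverse monomials of degree $i$ in $n-\delta$ variables is $\binom{i+n-\delta-1}{n-\delta-1}$, and the number of $L$-part monomials of degree $t-i$ in $\delta+1$ variables is $\binom{t-i+\delta}{\delta}$. Summing the product over $0\le i<m$ gives exactly the stated formula for $c_{n,\delta,m,t}$. The independence of the conditions is automatic here, since distinct monomials are linearly independent and we are literally counting a monomial basis of the quotient $[R]_t/[\mathfrak{p}^m]_t$. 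The remaining three special cases ($m=t$, $\delta=0$, $\delta=1$) are then routine binomial simplifications: for $m=t$ one sums the full transverse count and recognizes a hockey-stick/complementary-counting identity, while $\delta=0$ and $\delta=1$ follow by substituting $\delta$ and evaluating the small sums.

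The main obstacle, such as it is, is purely bookkeeping: one must be careful that ``vanishing to order $m$'' is correctly interpreted as membership in the $m$-th (symbolic) power of the ideal of $L$, and that the symbolic and ordinary powers agree. Once the reduction to a coordinate subspace and the identification $\mathfrak{p}^{(m)}=\mathfrak{p}^m$ are in place, the combinatorics is a standard stars-and-bars count with no genuine difficulty, and the three special-case identities are mechanical. I would therefore devote most of the write-up to justifying the monomial description and the symbolic-power identification, and treat the binomial manipulations briefly.
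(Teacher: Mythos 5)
Your argument is correct, and there is in fact no internal proof to compare it with: the paper states this lemma as an imported result, cited from \cite{DHST14}, Lemma A.2(C), and never proves it. Your reconstruction is the standard argument and fills that gap: by $PGL_{n+1}(K)$-invariance reduce to $L=V(x_{\delta+1},\ldots,x_n)$; identify ``vanishing to order at least $m$ along $L$'' with membership in $[\mathfrak{p}^m]_t$ for $\mathfrak{p}=(x_{\delta+1},\ldots,x_n)$, which rests on exactly the fact you isolate, namely $\mathfrak{p}^{(m)}=\mathfrak{p}^m$ for an ideal generated by a regular sequence of linear forms (cleanest justification: $\bigoplus_k \mathfrak{p}^k/\mathfrak{p}^{k+1}$ is a polynomial ring over the domain $R/\mathfrak{p}$, so each $\mathfrak{p}^m$ is $\mathfrak{p}$-primary); then count the monomial basis of $[R]_t/[\mathfrak{p}^m]_t$ according to transverse degree $i<m$, which gives precisely $\sum_{0\leq i<m}\binom{t-i+\delta}{\delta}\binom{i+n-\delta-1}{n-\delta-1}$, with linear independence of the conditions automatic because the quotient has a monomial basis. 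The specializations are indeed mechanical, as you claim: for $m=t$, complementary counting is quickest (the only monomials of degree $t$ with transverse degree $\geq t$ are the purely transverse ones, so $c_{n,\delta,t,t}=\binom{t+n}{n}-\binom{t+n-\delta-1}{n-\delta-1}$); $\delta=0$ is the hockey-stick identity; and $\delta=1$ follows from $i\binom{i+n-2}{n-2}=(n-1)\binom{i+n-2}{n-1}$ together with hockey-stick. One small point worth making explicit in a final write-up: the hypothesis $t\geq m$ is what guarantees $t-i\geq 1$ for all $i<m$, so every term of the sum is an honest (nonzero) monomial count, and it is also the range in which this Hilbert function value agrees with the Hilbert polynomial value $H_{mL}(t)$, which is how $c_{n,\delta,m,t}$ is defined in the introduction of the paper.
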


\begin{remark}
We denote by $d\lambda$ the subscheme of $\P^n$ defined by the ideal $I_\lambda^d$, which it is easy to see is saturated. 
With $V$ and $\lambda$ as above, applying 
Lemma \ref{adv} with $(n,\delta,m,t)=(n,n-e-2,d,d)$ allows us to write the number of conditions that we expect $d\lambda$ to impose on 
$[I_V]_d$, namely
\[
c_{n,n-e-2,d,d} =\binom{d+n}{n} - \binom{d+e+1}{e+1}.
\]
\end{remark}

We now consider the cone over $V$ with axis $\lambda$.

\begin{lemma} \label{lemma2}
Let $V$ be as above, let $p_1,\ldots,p_i$, $i=n-e-1$, be general points of $\P^n$ and let $\lambda$
be the span of the points $p_j$ (so $\lambda$ is itself general). 
Let $S$ be 
the union of lines $\overline{PQ}$, where $P \in V$ and $Q \in \lambda$. Then $S=C_\lambda(V)$ is a hypersurface of degree $d = \deg V$. 
\end{lemma}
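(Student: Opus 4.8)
The plan is to realize $S$ as a cone over the image of $V$ under linear projection from $\lambda$, and then to read off both its dimension and its degree from that image. First I would record the one piece of genericity actually needed at the outset: since $\dim V+\dim\lambda=e+(n-e-2)=n-2<n$, a general $\lambda$ is disjoint from $V$. Fix a linear subspace $M\subseteq\P^n$ of dimension $e+1$ complementary to $\lambda$ (so $\lambda\cap M=\varnothing$ and $\langle\lambda,M\rangle=\P^n$) and let $\pi\colon\P^n\setminus\lambda\to M\cong\P^{e+1}$ be the projection from $\lambda$. Because $V\cap\lambda=\varnothing$, the restriction $\pi|_V$ is an everywhere-defined morphism, so its image $W:=\pi(V)$ is closed.

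The key identification is $S=$ the cone over $W$. Indeed, for a point $P\in V$ the fiber $\langle\lambda,P\rangle=\pi^{-1}(\pi(P))$ is a linear space of dimension $n-e-1$ in which $\lambda$ is a hyperplane, so every line through $P$ not contained in $\lambda$ meets $\lambda$; hence $\langle\lambda,P\rangle=\bigcup_{Q\in\lambda}\overline{PQ}$. This shows that for $x\notin\lambda$ one has $x\in S$ iff $\pi(x)\in W$, so $S\setminus\lambda=\pi^{-1}(W)$ and $S=\pi^{-1}(W)\cup\lambda$. Writing $W=Z(G)$ for a form $G$ on $M$ and pulling back along $\pi$ (substituting $e+2$ linear forms cutting out $\lambda$), we get $S=Z(\pi^\ast G)$, which is Zariski closed and, once we know $\dim W=e$, is a hypersurface with $\deg S=\deg W$.

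It then remains to prove that $\pi|_V$ is birational onto $W$; this is the heart of the matter and the step I expect to be the main obstacle. Birationality forces $\dim W=e$, so that $W\subset\P^{e+1}$ is a hypersurface of the same degree as $V$: the projection pulls hyperplanes of $M$ back to hyperplanes of $\P^n$, so $(\pi|_V)^\ast\mathcal O(1)=\mathcal O(1)$ and $\deg V=(\deg \pi|_V)\cdot\deg W=\deg W$. To establish birationality I would show $\pi|_V$ is generically injective: it can fail to be injective at $P\in V$ only if some distinct $P'\in V$ has secant line $\overline{PP'}$ meeting $\lambda$. I would bound the locus of such $P$ by a dimension count on the incidence variety $\{(P,P',\lambda):P\neq P',\ \overline{PP'}\cap\lambda\neq\varnothing\}$. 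The pairs $(P,P')$ form a family of dimension $2e$; for each fixed line the condition that it meet the varying $\lambda$ is of codimension $n-1-(n-e-2)=e+1$ in the Grassmannian of $(n-e-2)$-planes; and since $\dim\mathrm{Sec}(V)+\dim\lambda\ge n$ for $e\ge1$ the projection of this incidence variety to the space of $\lambda$'s is dominant. Hence the fiber over a general $\lambda$ has dimension at most $2e-(e+1)=e-1$, so the ``bad'' points $P$ sweep out a proper subvariety of $V$; a general point of $V$ thus has a single preimage and $\pi|_V$ is birational.

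So the genuine difficulty is concentrated in this last count, which is the classical statement that a general projection of an $e$-dimensional variety to $\P^{e+1}$ is birational onto a hypersurface of the same degree; the surrounding reduction (disjointness, the cone identification, closedness, and $\deg S=\deg W$) is routine. Two minor points I would address in passing: for reducible equidimensional $V$ one argues componentwise and notes that distinct components map to distinct hypersurfaces for general $\lambda$, so degrees add; and over a field of positive characteristic one should check that the general projection is separable, so that generic injectivity on closed points really yields birationality, this being automatic in characteristic zero.
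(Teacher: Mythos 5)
Your proposal is correct (in characteristic zero) and its overall strategy is the same as the paper's: identify $S$ with $\pi^{-1}(W)\cup\lambda$, where $W=\pi(V)$ and $\pi$ is the projection from $\lambda$ to a complementary $\P^{e+1}$, and reduce the lemma to the statement that this general projection preserves the dimension and degree of $V$. The implementations of that key step, however, genuinely differ. The paper factors $\pi$ into a chain of $n-e-1$ projections from the single points $p_j$ spanning $\lambda$: dimension is preserved because projection from a point is finite on any closed subvariety missing the center, and degree is preserved because the bound $\dim C_p(V_j)\le 1+\dim V_j<\dim\P_j$ makes each point projection generically injective; closedness of $S$ comes from writing $S=\beta(\pi^{-1}(\pi(V)))$ with $\beta$ the blow-up along $\lambda$, and $\deg S=d$ is read off by intersecting with a general line. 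You instead treat the projection in one step and prove generic injectivity by a single incidence count in the Grassmannian of $(n-e-2)$-planes; your numbers check out (meeting a fixed line is a Schubert condition of codimension $n-1-(n-e-2)=e+1$, so over a general $\lambda$ the bad pairs form a family of dimension at most $e-1<e$), and your count is in fact more careful than the paper's wording, which performs the same quantifier exchange (``for general center, for general point of $V$'') implicitly. Likewise, your derivation of closedness and of $\deg S=\deg W$ by pulling back the reduced equation of $W$ is a clean substitute for the paper's line count, and your componentwise remark that distinct components of $V$ have distinct images for general $\lambda$ addresses a point the paper leaves tacit. (Your parenthetical dominance claim about the secant variety is unnecessary: if the incidence variety fails to dominate the space of $\lambda$'s, the general fiber is empty and the conclusion is even easier.)

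The one substantive caveat is the one you flag but leave open: in positive characteristic (and the paper does claim this section works over an arbitrary algebraically closed field), generic injectivity only shows that $K(V)/K(W)$ is purely inseparable, and since $\deg V=[K(V):K(W)]\cdot\deg W$, one could a priori lose a factor $p^k$ in the degree. Note that the paper's proof has exactly the same gap, since it too deduces degree preservation from generic injectivity alone. The gap can be closed by one more incidence count of the same kind, this time with tangent spaces: for a fixed smooth point $x\in V$, the condition that $\langle\lambda,x\rangle$ meet the embedded tangent space $\mathbb{T}_xV$ in more than $\{x\}$ has codimension $2$ on $\lambda$, so for general $\lambda$ the projection $\pi|_V$ is unramified at a general point of $V$, hence separable, hence birational onto $W$.
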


\begin{proof}
Let $\beta:\P^n_\lambda\to \P^n$ be the blow up of $\P^n$ along $\lambda$ and let $\pi:\P^n_\lambda\to \P^{e+1}$
be the projection from $\lambda$ to a general linear subspace $\P^{e+1}\subset\P^n$ of complementary dimension.
(So, identifying $\P^n \backslash \lambda$ as an open subset of $\P^n_\lambda$,
if $P \in \P^n \backslash \lambda$ then 
$\pi(P) = \hbox{span}(P,\lambda) \cap \P^{e+1}$.)
Then $S=\beta(\pi^{-1}(\pi(V)))$ and $\dim \pi(V')=e$ for each component $V'$ of $V$.
(To see $\dim \pi(V')=e$, note that we can regard $\pi:\P^n \backslash \lambda\to \P^{e+1}$
as a composition of a sequence of projections from the points $p_j$. Take 
a flag $\P^{e+1}=\P_i\subsetneq\P_{i-1}\subsetneq \cdots\subsetneq \P_1\subsetneq \P_0=\P^n$
of linear spaces $\P_j$ such that $p_1\in\lambda\cap\P_0$, $p_2\in\lambda\cap\P_1$, 
$\ldots$, $p_i\in\lambda\cap\P_{i-1}$, and define $\pi_j: \P_{j-1} \backslash \lambda\to \P_j$
to be the projection of $\P_{j-1}\backslash \lambda$ from $p_j$ to $\P_j$.
Since each $\pi_j$ has one dimensional fibers, if $W\subset \P_{j-1}$ is closed and irreducible 
with $W\cap \lambda=\varnothing$, then $\pi_j|_W$ is finite so not only is $\pi_j(W)$ irreducible and closed in $\P_j$,
but $\dim \pi_j(W)=\dim W$. Thus $\dim \pi(V')=\dim V'=e$ under the composition of the $\pi_j$.) 

Since $\pi$ has fibers of dimension $(n-e-2)+1$, each component of 
$\pi^{-1}(\pi(V))$ has dimension $e+(n-e-2)+1=n-1$. 
Since $\beta$ is birational off $\lambda$, $\dim S=\dim \pi^{-1}(\pi(V))=n-1$.
Let $V_j$ be the image of $V$ in $\P_j$ under the composition of the projections 
$\pi_1,\ldots,\pi_j$. 
Since for any point $p\in V_j$, the cone over $V_j$ with apex $p$ has dimension at most
$1+\dim V_j<\dim \P_j$, projection from a general point in $\P_j$ will be generically injective.
Thus $\deg V_j=\deg V$ for all $j$. In particular, $\deg \pi(V)=\deg V=d$.
Since $\pi$ maps a general line $L$ in $\P^n$ to a general line $L'$ in $\P_i$, $L'$ meets
$\pi(V)$ in $d$ distinct points. Thus $L$ meets $S$ in $d$ points so $\deg(S)=d$.
\end{proof}

\begin{lemma}\label{MultLemma}
Let $V, \lambda$ and $S$ be as in Lemma \ref{lemma2}. Then $S$ has multiplicity $d$ along $\lambda$. 
\end{lemma}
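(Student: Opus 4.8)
The plan is to use the explicit description of $S$ as the closure of $\pi^{-1}(\pi(V))$ from the proof of Lemma \ref{lemma2}, and to exhibit a defining equation of $S$ that visibly vanishes to order $d$ along $\lambda$. Since $S$ is a hypersurface of degree $d$ by Lemma \ref{lemma2}, its multiplicity along any subvariety, and in particular along $\lambda$, is automatically at most $d$ (the lowest-degree term of a local expansion of a degree-$d$ form has order at most $d$). So the entire content is the reverse inequality, and I expect to read off equality directly from the equation rather than bounding the two sides separately.

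First I would fix coordinates adapted to the projection. Choose $x_0,\dots,x_n$ on $\P^n$ so that $\lambda$ is the common zero locus of the $e+2$ forms $x_0,\dots,x_{e+1}$ (this is the right number since $\dim\lambda = n-e-2$), and so that the projection $\pi\colon \P^n\setminus\lambda\to\P^{e+1}$ of Lemma \ref{lemma2} is $[x_0:\cdots:x_n]\mapsto[x_0:\cdots:x_{e+1}]$, with target the complementary $\P^{e+1}$ cut out by $x_{e+2},\dots,x_n$. Next I would identify the defining form of $S$. By Lemma \ref{lemma2}, $\pi(V)\subset\P^{e+1}$ has dimension $e$ and degree $d$, hence is a hypersurface of $\P^{e+1}$ cut out by a single reduced homogeneous form $G\in K[x_0,\dots,x_{e+1}]$ with $\deg G=d$. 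Because $S=\overline{\pi^{-1}(\pi(V))}$ is the cone over $\pi(V)$ with apex $\lambda$, its preimage description shows that $S$ is cut out in $\P^n$ by this same form $G$, now regarded in $R=K[x_0,\dots,x_n]$ but still involving none of $x_{e+2},\dots,x_n$. Equivalently, $S$ is a cone with apex $\lambda$, so its ideal is generated by forms in the $e+2$ variables $x_0,\dots,x_{e+1}$ defining $\lambda$; being a hypersurface, this ideal is principal, generated by such a $G$ of degree $d$.

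Finally I would compute the multiplicity. Since $G$ is homogeneous of degree $d$ in exactly the $e+2$ coordinates $x_0,\dots,x_{e+1}$ that vanish on $\lambda$, every monomial of $G$ has degree $d$ in these coordinates. Dehomogenizing in any affine chart around a point $Q\in\lambda$ (where $x_0,\dots,x_{e+1}$ all vanish, while some $x_j$ with $j\geq e+2$ is nonzero) leaves $G$ unchanged as a polynomial in $x_0,\dots,x_{e+1}$, so the lowest-degree term of its local expansion at $Q$ has order exactly $d$. Hence $\operatorname{mult}_Q S=d$ for every $Q\in\lambda$, and therefore $S$ has multiplicity $d$ along $\lambda$.

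The main obstacle is the step identifying the defining form: justifying rigorously both that the single reduced form $G$ cutting out $\pi(V)\subset\P^{e+1}$ has degree exactly $d$, and that it continues to generate $I_S$ after inclusion into $R$ (i.e., that forming the cone introduces no extra equations in $x_{e+2},\dots,x_n$ and does not change the degree). Both points follow from the cone structure of $S$ together with the degree and dimension computations already carried out in the proof of Lemma \ref{lemma2}, but they are where care is required; once one knows the defining form of $S$ is a degree-$d$ form in $x_0,\dots,x_{e+1}$ alone, the multiplicity assertion is immediate from homogeneity.
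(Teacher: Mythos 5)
Your proof is correct, but it takes a genuinely different route from the paper's. The paper argues intersection-theoretically: for a general point $Q\in\lambda$ and a general point $P\in\P^n$, the line $L=\overline{PQ}$ lies in the closure of the fiber of $\pi$ through $P$, and since $\pi(P)\notin\pi(V)$, the line $L$ meets $S$ only at $Q$; because $\deg S=d$ and $L$ is a general line through $Q$, all $d$ intersections of $L$ with $S$ (counted with multiplicity) are concentrated at $Q$, forcing $\operatorname{mult}_Q S=d$ there, hence multiplicity $d$ along $\lambda$. You instead choose coordinates with $\lambda=\{x_0=\cdots=x_{e+1}=0\}$, identify the defining form of $S$ as the squarefree degree-$d$ equation $G$ of $\pi(V)\subset\P^{e+1}$ viewed inside $K[x_0,\ldots,x_n]$, and read off the multiplicity from homogeneity of $G$ in the variables cutting out $\lambda$. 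Both arguments lean on the computations inside the proof of Lemma \ref{lemma2} (you need $\dim\pi(V)=e$ and $\deg\pi(V)=d$; the paper needs $\deg S=d$ and the fiber description of $S$), so your reliance on that lemma is legitimate. What the paper's route buys: it is coordinate-free and entirely sidesteps identifying $I_S$, which is exactly the step you flag as delicate. What your route buys: it is more explicit and slightly stronger, giving $\operatorname{mult}_Q S=d$ at \emph{every} point $Q\in\lambda$ rather than at a general one, and it isolates the general principle that any degree-$d$ hypersurface that is a cone with apex $\lambda$ automatically has multiplicity $d$ along $\lambda$. To make your delicate step airtight, note that set-theoretically $S=\lambda\cup\pi^{-1}(\pi(V))=\{G=0\}$ (a point outside $\lambda$ lies on $S$ if and only if its projection lies on $\pi(V)$, while $\lambda\subset\{G=0\}$ since $G$ has positive degree), so the reduced hypersurface $S$ has principal ideal generated by the squarefree form $G$; alternatively, invoke the standard fact that the ideal of a cone with apex $\lambda$ is generated by forms in the linear forms vanishing on $\lambda$, and get $\deg G=d$ directly from $\deg S=d$ in Lemma \ref{lemma2}.
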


\begin{proof}
Let $Q$ be a general point of $\lambda$ and $P$ a general point of $\P^n$. Let $L$ be the line defined by $P$ and $Q$.
With $\beta$ and $\pi$ as in the proof of Lemma \ref{lemma2}, then $\pi(P)\not\in\pi(V)$ and
$L\subset\beta^{-1}(\pi(P))$, so $L$ meets $S$ only at $Q$. Thus $S$ has multiplicity $d$ at $Q$.
\end{proof}

\begin{proposition} \label{curve}
Let $C$ be a reduced, equidimensional, non-degenerate curve of degree $d$ in $\P^n$.  Let $\lambda$ be a general 
linear space of dimension $n-3$ in $\P^n$. Let $S$ be the hypersurface defined in Lemma \ref{lemma2}. Then 
$(n,d\lambda,C,d)$ is unexpected, hence $S$ is unexpected for $C$.
\end{proposition}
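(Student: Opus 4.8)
The plan is to read off the expected dimension explicitly and then reduce unexpectedness to two independent facts. Put $h=\dim[R/I_C]_d$, so that $\dim[I_C]_d=\binom{d+n}{n}-h$. Applying the Remark after Lemma \ref{adv} with $e=1$ gives $H_{d\lambda}(d)=c_{n,n-3,d,d}=\binom{d+n}{n}-\binom{d+2}{2}$, whence $\dim[I_C]_d-H_{d\lambda}(d)=\binom{d+2}{2}-h$. Thus $(n,d\lambda,C,d)$ is unexpected precisely when
\[
\dim[I_{d\lambda\cup C}]_d>\max\left(0,\ \binom{d+2}{2}-h\right).
\]
I would establish this through (A) $[I_{d\lambda\cup C}]_d\neq 0$, making the left side at least $1$, and (B) $h\geq\binom{d+2}{2}$, making the right side $0$.

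Fact (A) is immediate from the earlier lemmas: by Lemma \ref{lemma2} the cone $S=C_\lambda(C)$ is a hypersurface of degree $d$ containing $C$, and by Lemma \ref{MultLemma} it has multiplicity $d$ along $\lambda$. Hence its defining form $F$ has degree $d$, lies in $I_\lambda^d$, and vanishes on $C$, so $F\in[I_\lambda^d]_d\cap[I_C]_d=[I_{d\lambda\cup C}]_d$. This same form is what exhibits $S$ as the unexpected hypersurface once unexpectedness is known.

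For fact (B), choose coordinates with $\lambda=\{x_0=x_1=x_2=0\}$; then $[I_{d\lambda}]_d=K[x_0,x_1,x_2]_d$ is the $\binom{d+2}{2}$-dimensional space of cone forms over $\lambda$. Let $A_d$ be the image of this space under restriction to $C$ inside $[R/I_C]_d$. A cone form vanishes on $C$ if and only if it vanishes on the projection $\pi(C)$ from $\lambda$, which by Lemma \ref{lemma2} is a reduced plane curve of degree $d$; the forms vanishing on it are the scalar multiples of its single degree-$d$ equation, so $\dim[I_{d\lambda\cup C}]_d=1$ and $\dim A_d=\binom{d+2}{2}-1$. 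Consequently (B) is equivalent to the strict inclusion $A_d\subsetneq[R/I_C]_d$.

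Proving $A_d\subsetneq[R/I_C]_d$ is the crux and the step I expect to be the main obstacle. The idea is that cone forms are pulled back along $\pi$ and so cannot separate two points of $C$ on a common line through $\lambda$, whereas arbitrary degree-$d$ forms can. Concretely, since $C$ is non-degenerate in $\P^n$ with $n\geq 3$ it is not planar, so its secant variety $\Sigma$ has dimension $3$; because $\dim\Sigma+\dim\lambda=3+(n-3)=n$, the general $\lambda$ meets $\Sigma$, and for general $\lambda$ this intersection point lies on an honest secant, yielding distinct $P_1,P_2\in C$ collinear with a point of $\lambda$. Then $\pi(P_1)=\pi(P_2)$, so fixing representatives we have $(x_0,x_1,x_2)(P_1)=\mu\,(x_0,x_1,x_2)(P_2)$ with $\mu\neq 0$, and the functional $\psi(G)=G(P_1)-\mu^d G(P_2)$ descends to $[R/I_C]_d$ (as $P_1,P_2\in C$), annihilates every cone form by homogeneity, yet is nonzero on the class of $\ell^d$ for a linear form $\ell$ with $\ell(P_1)\neq 0=\ell(P_2)$ (here $\ell^d\notin[I_C]_d$ because $C$ is reduced and non-degenerate). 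This forces $A_d$ to be proper, giving $h\geq\binom{d+2}{2}$. Combining (A) and (B) yields $\dim[I_{d\lambda\cup C}]_d\geq 1>0=\max(0,\binom{d+2}{2}-h)$, so $(n,d\lambda,C,d)$ is unexpected and $S$ is unexpected for $C$. The delicate points to nail down will be the dimension of $\Sigma$ (and the genericity ensuring an honest, rather than tangential, secant through $\lambda$) for possibly reducible $C$, and the exact count $\dim[I_{d\lambda\cup C}]_d=1$.
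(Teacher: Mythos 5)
Your reduction to the two facts (A) and (B) matches the skeleton of the paper's proof: the paper also gets nonvanishing of $[I_{d\lambda\cup C}]_d$ from the cone of Lemmas \ref{lemma2} and \ref{MultLemma}, and the entire content of its argument is your inequality (B), i.e.\ $\vdim(d\lambda,C,d)\le 0$. But your proof of (B) takes a different route, and that route has a genuine gap: the claim that non-degeneracy of $C$ forces its secant variety $\Sigma$ to have dimension $3$. This is a classical fact for \emph{irreducible} curves, but it is false for reducible ones, and the proposition allows reducible $C$ (only ``reduced, equidimensional'' is assumed). Concretely, let $C$ be the union of $d\ge 3$ lines through a common point spanning $\P^3$ (for $d=3$, the three coordinate axes through $[0:0:0:1]$). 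This $C$ is reduced, equidimensional, non-degenerate of degree $d$, yet every line through two distinct points of $C$ lies in one of the $\binom{d}{2}$ planes spanned by pairs of the lines, so $\Sigma$ is a $2$-dimensional union of planes. A general point $\lambda\in\P^3$ therefore lies on no honest secant, there is no pair of distinct points $P_1,P_2\in C$ with $\pi(P_1)=\pi(P_2)$, and your functional $\psi$ cannot be constructed. The same configuration in $\P^n$ (concurrent lines spanning $\P^n$) defeats the argument for every $n$: then $\dim\Sigma+\dim\lambda=2+(n-3)<n$, so a general $\lambda$ misses $\Sigma$ altogether.

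Note that conclusion (B) is still true for such $C$ --- for the three coordinate axes one computes $\dim[R/I_C]_3=10=\binom{3+2}{2}$ --- but for a reason your argument does not see: the general projection $\pi|_C$ is bijective onto $\pi(C)$ but is not an isomorphism, because it collapses the $3$-dimensional Zariski tangent space at the point of concurrence into $\P^2$. A repair along your lines would therefore need, in addition to the secant case, an infinitesimal variant of $\psi$ (a functional built from a tangent vector in the kernel of $d\pi$), together with a proof that for every reduced, equidimensional, non-degenerate $C$ the general projection to $\P^2$ fails to be a closed immersion. The paper sidesteps this whole case analysis with a short cohomological argument: by \cite[Remark 1, p.~497]{GLP} the restriction map $H^0(\mathcal O_{\P^n}(d))\to H^0(\mathcal O_C(d))$ is surjective, so $\dim[R/I_C]_d=h^0(\mathcal O_C(d))=d^2-g_C+1$, and the arithmetic genus comparison $g_C<g_D$ with a plane curve $D$ of degree $d$ (from the proof of \cite[Proposition 2.1]{HMNT}) gives exactly $\dim[R/I_C]_d>\binom{d+2}{2}-1$, i.e.\ your (B), uniformly in all cases. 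Your closing caveat about ``delicate points'' was well placed, but for reducible $C$ the issue is not delicacy: the key geometric claim is simply false, so the proof as written is incomplete.
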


\begin{proof}
Using Lemma \ref{adv}, it is enough to show
\[
\dim [I_C]_d - \left [ \binom{d+n}{n} - \binom{d+2}{2} \right ] \leq 0.
\]
By \cite[Remark 1, p. 497]{GLP} the map
\[
H^0(\mathcal O_{\P^n}(d)) \rightarrow H^0(\mathcal O_C(d))
\]
is surjective. 
We also know that $h^0(\mathcal O_C(d)) = d^2 - g_C + 1$ and that $g_C<g_D$, where $g_C$ is the arithmetic 
genus of $C$ and where $D$ is a plane curve of degree $d$ (see the proof of \cite[Proposition 2.1]{HMNT}). Hence
\[
\begin{array}{rcl}
\displaystyle \dim [I_C]_d - \left [ \binom{d+n}{n} - \binom{d+2}{2} \right ] & = & 
\displaystyle  \binom{d+n}{n} - [d^2 - g_C + 1] - \left [ \binom{d+n}{n} - \binom{d+2}{2} \right ] \\ \\
& = & \displaystyle  \binom{d+2}{2} - [d^2 - g_C + 1] \\ \\
& < & \displaystyle  \binom{d+2}{2} - [d^2 - g_D + 1] \\ \\
& = & 1.
\end{array}
\]
\end{proof}

\begin{theorem}\label{MainThmSect2}
Let $V \subset \P^n$ be a reduced, equidimensional, non-degenerate subvariety of dimension $e$ ($1 \leq e \leq n-2$) and 
degree $d$. Let $\lambda$ be a general linear space of dimension $n-e-2$ (i.e. of codimension $e+2$). Let $S$ be the 
corresponding union of lines as above (so $S=C_\lambda(V)$). Then 
$S$ is the unique hypersurface of degree $d$ vanishing to order $d$ on $\lambda$ and containing $V$, and
$(n,d\lambda,V,d)$ is unexpected, hence $S$ is unexpected
for $V$.
\end{theorem}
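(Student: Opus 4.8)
The plan is to prove the theorem in two movements: first establish unexpectedness by a dimension count, then upgrade uniqueness and existence of the degree-$d$ cone. The key observation is that Lemmas \ref{lemma2} and \ref{MultLemma} already hand us a concrete hypersurface, namely $S=C_\lambda(V)$, which has degree $d$, vanishes to order $d$ along $\lambda$, and contains $V$. So the form defining $S$ lies in $[I_{d\lambda\cup V}]_d$, which immediately gives $\dim[I_{d\lambda\cup V}]_d\geq 1$. To conclude that $(n,d\lambda,V,d)$ is unexpected, it suffices by the definition of unexpectedness to show that the ``expected'' quantity $\max\bigl(0,\dim[I_V]_d-H_{d\lambda}(d)\bigr)$ equals $0$; that is, I would show
\[
\dim[I_V]_d \;\leq\; H_{d\lambda}(d) \;=\; c_{n,n-e-2,d,d} \;=\; \binom{d+n}{n}-\binom{d+e+1}{e+1},
\]
the last equality coming from the Remark following Lemma \ref{adv}.

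The heart of the matter is thus the inequality $\dim[I_V]_d\leq\binom{d+n}{n}-\binom{d+e+1}{e+1}$, equivalently $H_V(d)\geq\binom{d+e+1}{e+1}$. This is exactly the higher-dimensional analogue of the genus estimate used in Proposition \ref{curve}. The strategy I would follow mirrors that proof: I want to show that restriction
\[
H^0(\mathcal O_{\P^n}(d))\longrightarrow H^0(\mathcal O_V(d))
\]
is surjective, so that $H_V(d)=h^0(\mathcal O_V(d))$, and then bound $h^0(\mathcal O_V(d))$ from below by $\binom{d+e+1}{e+1}$. For the surjectivity I would appeal to a Castelnuovo--Mumford regularity bound for non-degenerate reduced equidimensional subvarieties of the expected type (the Gruson--Lazarsfeld--Peskine circle of ideas, as cited via \cite{GLP} in Proposition \ref{curve}, generalized to dimension $e$). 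For the lower bound on $h^0(\mathcal O_V(d))$, the natural comparison object is a hypersurface of degree $d$ inside a generic $\P^{e+1}$, i.e.\ a degree-$d$ variety spanning an $(e+1)$-plane: any non-degenerate $V$ of degree $d$ and dimension $e$ should impose \emph{at least} as many conditions on degree-$d$ forms as such a reference variety, whose value of $h^0(\mathcal O(d))$ is precisely $\binom{d+e+1}{e+1}$ by Lemma \ref{adv} with $(n,\delta,m,t)=(e+1,e,d,d)$. I expect this comparison to follow from the non-degeneracy and a minimal-degree/Castelnuovo-type argument, degenerating the $e=1$ computation in Proposition \ref{curve} to higher dimensions.

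Once unexpectedness is in hand, uniqueness is almost forced. If $S'$ is any hypersurface of degree $d$ vanishing to order $d$ on $\lambda$ and containing $V$, then its defining form lies in $[I_{d\lambda\cup V}]_d$. I would argue that this space is one-dimensional: the above inequality shows the expected dimension is $0$, and a hypersurface of degree $d$ singular to order $d$ along the $(n-e-2)$-plane $\lambda$ must be a cone with vertex $\lambda$ (restricting to a general complementary $\P^{e+1}$ through a point, the degree-$d$ form vanishes to order $d$ at a point of a $\P^{e+1}$, hence is supported on the cone structure), so any such $S'$ is determined by its trace $\pi(S')$ on $\P^{e+1}$, a degree-$d$ hypersurface there containing $\pi(V)=\pi(V)$, which is a degree-$d$ variety spanning $\P^{e+1}$; that trace is then forced to equal $\pi(S)$, giving $S'=S$.

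The main obstacle I anticipate is the lower bound $H_V(d)\geq\binom{d+e+1}{e+1}$ for general $e$. For curves this was clean because $h^0(\mathcal O_C(d))=d^2-g_C+1$ together with the classical bound $g_C<g_D$ did all the work; in higher dimension there is no single numerical invariant playing the role of the genus, so I would need either a Riemann--Roch / Hilbert-polynomial comparison against the minimal-degree reference variety or a direct Castelnuovo-theory argument controlling $h^0(\mathcal O_V(d))$ via the non-degeneracy hypothesis. Securing the requisite regularity bound to guarantee surjectivity of the restriction map in arbitrary dimension, uniformly over all admissible $V$, is the delicate point.
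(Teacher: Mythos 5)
Your existence step is fine, and your uniqueness step is workable (though more roundabout than necessary: once you note that a degree-$d$ hypersurface containing $V$ and vanishing to order $d$ along $\lambda$ must contain every line joining a point of $\lambda$ to a point of $V$ --- by B\'ezout on each such line, which meets the hypersurface with total multiplicity at least $d+1$ --- it contains $S$ and hence equals $S$; this is how the paper argues, with no need for the cone-structure and trace discussion). The genuine gap is in the heart of the matter, and you have flagged it yourself: your proof of $\dim[I_V]_d \leq \binom{d+n}{n}-\binom{d+e+1}{e+1}$ rests on (a) surjectivity of $H^0(\mathcal O_{\P^n}(d))\to H^0(\mathcal O_V(d))$ for every reduced, equidimensional, non-degenerate $V$ of dimension $e$, and (b) the bound $h^0(\mathcal O_V(d))\geq\binom{d+e+1}{e+1}$, and you prove neither. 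For (a) you propose to use \cite{GLP} ``generalized to dimension $e$,'' but \cite{GLP} is a theorem about curves, and its natural higher-dimensional analogue --- regularity bounded by $\deg - \operatorname{codim} + 1$, i.e.\ the Eisenbud--Goto conjecture --- is \emph{false} for general singular varieties (McCullough--Peeva); since $V$ here is only assumed reduced and equidimensional, there is no off-the-shelf regularity statement to invoke, and obtaining the needed surjectivity uniformly in all dimensions is not a routine extension but the hard part. Claim (b) is likewise only asserted (``I expect this comparison to follow\dots''); granting (a), it is essentially equivalent to the inequality being proved.

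The paper's proof is designed precisely to avoid this difficulty, and the contrast shows what your proposal is missing. It inducts on $e$: the base case $e=1$ is Proposition \ref{curve}, which is the \emph{only} place \cite{GLP} is used, legitimately, for curves. For $e\geq 2$ it takes a general hyperplane section $W=V\cap H$ of dimension $e-1$, uses the restriction sequence to get $\dim[I_V]_d \leq \dim[I_V]_{d-1}+\dim[I_{W|H}]_d$, bounds $\dim[I_{W|H}]_d$ by the inductive hypothesis, and bounds $\dim[I_V]_{d-1}$ by a purely geometric observation involving the cone itself: no hypersurface of degree $f\leq d-1$ can contain $V$ and vanish to order $f$ along $\lambda$, since it would contain all lines from $\lambda$ to $V$ and hence contain $S$, of degree $d>f$. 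Thus $[I_V\cap I_\lambda^{d-1}]_{d-1}=0$, so by Lemma \ref{adv} (with $m=t=d-1$, $\delta=n-e-2$) one gets $\dim[I_V]_{d-1}\leq\binom{d+n-1}{n}-\binom{d+e}{e+1}$, and Pascal's identity closes the induction. If you want to salvage your write-up, this hyperplane-section induction is the missing idea: it converts the higher-dimensional regularity problem you could not solve into the curve case plus a B\'ezout argument.
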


\begin{proof}
Since $S=C_\lambda(V)$, $S$ contains $V$.
By Lemmas \ref{lemma2} and \ref{MultLemma}, $S$ is a hypersurface of degree $d$ vanishing to order $d$ on $\lambda$.
But any hypersurface of degree $d$ containing $V$ and vanishing to order $d$ along $\lambda$ must contain each line
from $\lambda$ to $V$, hence any such hypersurface must contain $S$, and thus must be $S$; this shows uniqueness.
From Lemma \ref{adv} we know that the expected number of conditions imposed by $d\lambda$ is
\[
\binom{d+n}{n} - \binom{d+e+1}{e+1}.
\]
We want to show that
\[
\dim [I_V]_d - \left [ \binom{d+n}{n} - \binom{d+e+1}{e+1} \right ] \leq 0.
\]
Our proof is by induction on $e$. The case $e=1$ is Proposition \ref{curve}, so assume $e \geq 2$. Let $H$ be a general 
hyperplane and $W = V \cap H \subset H = \P^{n-1}$. Note that $\dim W = e-1$. We have the exact sequence 
\[
\begin{array}{ccccccccccccccccccc}
0 & \rightarrow & [I_V]_{d-1} & \rightarrow & [I_V]_d & \rightarrow & [I_{W|H} ]_d & \longrightarrow & H^1(\mathcal I_V(d-1)) & \rightarrow & \dots \\
&&&&&&\hbox{\ \ \ \ \ \ \ } \searrow & & \nearrow \hbox{\ \ \ \ \ \ \ \ \ \ \ \ } \\
&&&&&&& [K]_d \\
&&&&&&\hbox{\ \ \ \ \ \ \ } \nearrow & & \searrow \hbox{\ \ \ \ \ \ \ \ \ \ \ \ } \\
&&&&&&\hbox{\  } 0 & & 0 \hbox{\ \ \ \ \ \  } \\
\end{array}
\]
so
\[
\dim [I_V]_d = \dim [I_V]_{d-1} + \dim [I_{W|H}]_d - \dim [K]_d.
\]
Then 
\[
\begin{array}{l}
\displaystyle \dim [I_V]_d - \left [ \binom{d+n}{n} - \binom{d+e+1}{e+1} \right ]  \\ \\
\displaystyle =  \dim [I_V]_{d-1} + \dim [I_{W|H}]_d - \dim [K]_d -  \left [ \binom{d+n}{n} - \binom{d+e+1}{e+1} \right ] \\ \\
\displaystyle \leq \dim [I_V]_{d-1} + \binom{d+n-1}{n-1} - \binom{d+e}{e} - \dim [K]_d - \binom{d+n}{n} + \binom{d+e+1}{e+1} \\ \\
\displaystyle \leq \dim [I_V]_{d-1} - \binom{d+n-1}{n} + \binom{d+e}{e+1}
\end{array}
\]
(using induction on the third line). As in \cite{HMNT} Remark 2.2, for any $f \leq d-1$ there can be no hypersurface of 
degree $f$ containing $V$ and having multiplicity $f$ along $\lambda$ because then such a hypersurface would contain $S$, 
which is impossible since $\deg S = d > f$. This implies that the number of independent conditions $d\lambda$ imposes on 
$[I_V]_{d-1}$ is at least $\dim [I_V]_{d-1}$, so 
(using Lemma \ref{adv} and taking $m = t  = d-1$, $\delta = n-e-2$)
\[
\dim [I_V]_{d-1} \leq \binom{d-1+n}{n} - \binom{d-1+n - (n-e-2) - 1}{n-(n-e-2)-1} = \binom{d+n-1}{n} - \binom{d+e}{e+1}
\] 
hence $\dim [I_V]_d - \left [ \binom{d+n}{n} - \binom{d+e+1}{e+1} \right ] \leq 0$ and we are done.
\end{proof}

\begin{remark}
Given a nonzero vector subspace $V\subseteq (k[\P^n])_t$ with $t>0$, a general point of multiplicity 1 imposes 
exactly one condition on $V$.
It is natural to ask the analogous question of whether a general (reduced) line always imposes the expected number
of conditions (namely $t+1$). We now give a simple example in $\P^4$  to show that this is not the case.

Let $V=2L_1+L_2$, where $L_1, L_2, L_3$ are general lines in $\P^4$.  Then $\dim [I_{L_1}^2]_2 = 6$ 
and $\dim [I_{L_1}^2\cap I_{L_2}]_2 = 3$ as expected. If $L_3$ were 
to impose the expected 3 conditions, there would be no hypersurface of degree 2 containing $2L_1 + L_2 + L_3$. 
However, hyperplane the $H$ spanned by $L_1$ and $L_2$ and the hyperplane spanned by $L_1$ and $L_3$
together contain $2L_1 + L_2 + L_3$. Thus $L_3$ fails to impose the expected 3 conditions on $[I_V]_2$.
(Here it is easy to see why the number of conditions is less than 3, and thus why, at least in some cases,
counting conditions imposed by general lines is more complicated than doing so for general points. 
The hyperplane $H$ is a fixed component of the linear system
$[I_{L_1}^2\cap I_{L_2}]_2$ and $L_3$ intersects $H$; this point of intersection
reduces the number of conditions imposed by $L_3$.)

Example \ref{NiceLineExample} gives a more subtle and less easily explained instance of 
the conditions imposed by a reduced general line failing to be independent, but this time in $\P^3$.
\end{remark}

\section{Lines in $\P^3$}\label{LinesInP3}

Let $L_1,\ldots,L_r$ be general linear subvarieties of $\P^n$ where $d_i=\dim L_i$.
Let $X(m_1,\ldots,m_r)=m_1L_1+\cdots+m_rL_r$ be the scheme defined by the ideal $\cap_i I_{L_i}^{m_i}\subset K[\P^n]$.
When $d_i=\delta$ for all $i$ and some $\delta$, to efficiently indicate how often each multiplicity is repeated,
we will use $X_\delta(m_1^{\times r_1},\ldots,m_s^{\times r_s})$ to denote that there are $r_i$ general $L_i$ 
of dimension $\delta$ with multiplicity $m_i$
(with $m_i^{\times r_i}$ written as $m_i$ when $r_i=1$).

Let $Z$ be any subscheme of $\P^n$ and let $X=X(m_1,\ldots,m_r)$.
When $Z=\varnothing$, we regard $I_Z=K[\P^n]$.
Given the triple $(X,Z,t)$, where $t\geq0$ is an integer, we introduce the following notation:\vskip\baselineskip
$\adim (X,Z,t)=\dim [I_X\cap I_Z]_t$ is the {\it actual dimension} of $(X,Z,t)$;\vskip\baselineskip
$\vdim (X,Z,t)=\dim [I_Z]_t-H_X(t)$ is the {\it virtual dimension} of $(X,Z,t)$; and\vskip\baselineskip
$\edim (X,Z,t)=\max(0,\vdim(X,Z,t))$ is the {\it expected dimension} of $(X,Z,t)$,\vskip\baselineskip
\noindent and we have
$$\adim (X,Z,t)\geq\edim (X,Z,t)\geq\vdim (X,Z,t).$$
\vskip\baselineskip

We now define $u(X,Z,t)$, the {\it unexpectedness} of $(X,Z,t)$, as 
$$u(X,Z,t)=
\begin{cases}
 	 0, \textrm{ \ \ if }\adim(X,Z,t)=0\\
 	\adim(X,Z,t)-\vdim(X,Z,t), \textrm{ \ \ otherwise. }
\end{cases}
$$
Thus $(X,Z,t)$ is unexpected if and only if $u(X,Z,t)>0$.

An important special case is when $L_1,\ldots,L_r$ are pairwise disjoint,
because then we can be more explicit about the Hilbert polynomial $H_X$
since $H_X(t)=\sum_{1\leq i\leq r} c_{n,\delta,m_i,t}$.
Of course, there are no nontrivial forms of degree $t$ vanishing on 
$X(m_1,\ldots,m_r)$ when $t<m_i$ for some $i$. 
For $t\geq \max_i\{m_i\}$, we can apply Lemma \ref{adv} to compute $c_{n,\delta,m_i,t}$.
When $\delta=0$, it gives the well known fact that
$$c_{n,0,m,t}=\binom{n+m-1}{n}$$ 
and for $\delta=1$ it gives
$$c_{n,1,m,t}=(t+1)\binom{m+n-2}{n-1}-(n-1)\binom{m+n-2}{n}.$$ 
So, for example, 
$$c_{3,1,m,t}=\frac{1}{6}m(m+1)(3t+5-2m);$$ 
i.e., $\dim [I_{mL}]_t=\binom{d+3}{3}-c_{3,1,m,t}$ holds
when $t\geq m$ and $L$ is a line in $\P^3$.

We begin with a proposition that will be useful for our main focus of lines in $\P^3$, and which we use 
in the proof of Theorem \ref{extendedThm}.

\begin{proposition}\label{BezoutTypeProp} Let $p_1,\ldots,p_s\in L\subset \P^3$, where $L$ is a line and let
$F\in K[\P^3]$ be a form of degree $t$. Assume $F$ vanishes at each $p_i$ to order at least 
$m$. If $s\geq t-m+2$, then $F$ vanishes to order at least $m$ on $L$.
\end{proposition}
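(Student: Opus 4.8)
The plan is to restrict the form $F$ to the line $L$ and compare two vanishing orders along $L$. Choosing an affine coordinate $u$ on $L$, the restriction $F|_L$ is a polynomial in $u$ of degree at most $t$ (since $F$ has degree $t$ and we are looking at the single-variable restriction). The hypothesis that $F$ vanishes at each $p_i$ to order at least $m$ along $L$ forces $F|_L$ to have a zero of order at least $m$ at each of the $s$ points $p_1,\ldots,p_s$, giving $F|_L$ at least $sm$ zeros counted with multiplicity. If $s\geq t-m+2$, then $sm \geq (t-m+2)m$, and I would check that this quantity exceeds $t$ whenever $m\geq 1$; indeed $(t-m+2)m - t = (m-1)(t-m+1) + 1 > t$ is the key inequality to verify. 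A single-variable polynomial of degree at most $t$ with more than $t$ zeros (with multiplicity) must be identically zero, so $F|_L\equiv 0$.

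Establishing $F|_L\equiv 0$ only shows $F$ vanishes to order $1$ along $L$, not order $m$, so the real content is an inductive or iterated argument. The natural approach is to induct on $m$. Set up coordinates so that $L$ is cut out by two of the variables, say $L=\{x_2=x_3=0\}$ in $\P^3$ with $x_0,x_1$ the coordinates on $L$. Vanishing to order $m$ along $L$ means every monomial of $F$ involves a total degree at least $m$ in $x_2,x_3$; equivalently, $F$ lies in $I_L^m=(x_2,x_3)^m$. I would write $F$ in terms of its expansion by powers of the transverse variables, $F=\sum_{k\geq 0} F_k$, where $F_k$ collects the terms of degree exactly $k$ in $(x_2,x_3)$. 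The base case $k=0$ is precisely the restriction argument above: $F_0 = F|_L$, and the single-variable zero count forces $F_0=0$.

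For the inductive step, having shown $F\in I_L^{j}$ for some $j<m$, I would examine the leading transverse part $F_j$, which is a form of bidegree $(t-j,j)$ — degree $t-j$ in $x_0,x_1$ and degree $j$ in $x_2,x_3$. The hypothesis that $F$ vanishes to order $m>j$ at each $p_i$ along the transverse directions should translate into each coefficient of $F_j$ (as a binary form in $x_2,x_3$) being a polynomial in $x_0,x_1$ that vanishes at every $p_i$. Each such coefficient has degree $t-j$, and the number of conditions is again $s\geq t-m+2 > t-j$ (using $j<m$), so each coefficient has more zeros than its degree and hence vanishes. This gives $F_j=0$, i.e. $F\in I_L^{j+1}$, completing the induction and yielding $F\in I_L^m$.

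The main obstacle I anticipate is bookkeeping the translation between "vanishing to order $m$ at the point $p_i$" and the vanishing of the transverse Taylor coefficients at the inductive stage. One must be careful that the order-$m$ vanishing at $p_i$, combined with the already-established membership $F\in I_L^j$, really does kill the relevant coefficient of $F_j$ rather than only a mixed partial — this requires organizing the partial derivatives by their transverse degree and noting that, for the coefficient of a fixed monomial $x_2^a x_3^b$ with $a+b=j<m$, the tangential derivatives along $L$ of that coefficient are controlled by the order-$m$ vanishing. Once this dictionary is set up cleanly, the numerical inequality $s\geq t-m+2 > t-j$ for $0\le j<m$ does all the work, and the argument reduces to the one-variable fact that a polynomial with too many roots is zero, applied coefficient by coefficient.
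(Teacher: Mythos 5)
Your overall strategy (expand $F$ in the transverse variables and kill the Taylor coefficients one transverse degree at a time by one-variable zero counting) is genuinely different from the paper's, and it can be made to work, but the inductive step as you state it contains a real numerical error. You claim each coefficient of $F_j$ vanishes (simply) at every $p_i$ and that $s \geq t-m+2 > t-j$ ``using $j<m$''. But $t-m+2 > t-j$ is equivalent to $j \geq m-1$, so this inequality is \emph{false} at every intermediate stage $1 \leq j \leq m-2$; for $m \geq 3$ your count gives a degree $t-j$ polynomial vanishing at no more points than its degree, which forces nothing. (Concretely: $t=10$, $m=3$, $s=9$, $j=1$ gives a degree $9$ coefficient vanishing at $9$ points, which need not be zero.) There is also a small slip in your base case: the identity $(t-m+2)m - t = (m-1)(t-m+1)+1$ is correct, but what you need is that this quantity is $>0$, not $>t$.

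The missing ingredient is that the hypothesis gives much more than simple vanishing of the coefficients. Writing $F = \sum_{a,b} c_{a,b}(x_0,x_1)\,x_2^a x_3^b$ with $L=\{x_2=x_3=0\}$, order-$m$ vanishing of $F$ at $p_i \in L$ is equivalent to: $c_{a,b}$ vanishes to order at least $m-a-b$ at $p_i$, for every $(a,b)$. This is exactly the derivative bookkeeping you allude to in your last paragraph but never fold into the count. With it, each coefficient with $a+b=j<m$ is a degree $t-j$ form on $L$ with at least $s(m-j) \geq (t-m+2)(m-j) > t-j$ zeros counted with multiplicity, hence is zero --- the same computation as your base case with $(t,m)$ replaced by $(t-j,m-j)$. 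Note that once the dictionary is in place, no induction is needed at all: every coefficient of transverse degree $<m$ dies independently. For comparison, the paper's proof avoids coordinates entirely: it restricts $F$ to a general plane $H \supset L$ (which preserves the order of vanishing along $L$), uses the zero count to conclude that the linear form cutting out $L$ in $H$ divides $F|_H$, and iterates on the quotient, whose degree drops to $t-1$ while the multiplicities drop to $m-1$, so the hypothesis $s \geq t-m+2 = (t-1)-(m-1)+2$ is preserved verbatim at each step.
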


\begin{proof}
If $t<m$, then $F=0$ and the claim holds. Also, the claim clearly holds if $m=0$.
So assume $t\geq m>0$, so $t=m+\epsilon$ for some $\epsilon\geq 0$. 
The order of vanishing of $F$ on $L$ equals the order of vanishing of $F|_H$ on $L$, where
$H$ is a general plane containing $L$. 
Then the number of zeros of $F|_H$ on $L$ is 
at least $sm\geq (t-m+2)m=2m+\epsilon m >m+\epsilon=t$, so $F$ vanishes on $L$.
Since $F|_H$ vanishes on $L$, we see $F|_H-L$ vanishes at $s$ points of $L$ to order
at least $m-1$, but $F|_H-L$ has degree $t-1\geq m-1$, and we have $s\geq t-m+2=(t-1)-(m-1)+2$.
If $m-1>0$, arguing as before shows that $F|_H-L$ vanishes on $L$. 
We can continue in this way until $m$ is reduced to 0; thus $F|_H$ vanishes to order 
at least $m$ on $L$, and hence so does $F$.
\end{proof}

\begin{theorem}\label{extendedThm}
Let $L_1,\ldots,L_r,L_1',\ldots,L_s'$ be general lines in $\P^3$.
Let $X=m_1L_1+\cdots+m_rL_r$ and $Z'=m_1'L_1'+\cdots+m_s'L_s'$.
If $u(X+Z',\varnothing,t)>u(Z',\varnothing,t)$, then 
$(X,Z',t)$ is unexpected, in which case so is $(X,Z,t)$,
where $Z$ is obtained from $Z'$ by, for each $i$, either keeping $m_i'L_i'$ as is or replacing it by
a fat point subscheme consisting of $s_i$
points of $L_i'$ each having an assigned multiplicity of at most $m_i'$ but where
at least $t-m_i'+2$ of the points have multiplicity exactly $m_i'$.
\end{theorem}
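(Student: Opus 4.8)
The plan is to prove the two conclusions separately, reducing each to a single bookkeeping identity among the quantities $\adim$, $\vdim$ and $u$, with Proposition \ref{BezoutTypeProp} providing the one geometric input.

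For the first conclusion I would begin by recording the two structural facts that make the hypothesis usable. Since $I_{X+Z'}=I_X\cap I_{Z'}$, we have $\adim(X+Z',\varnothing,t)=\dim[I_X\cap I_{Z'}]_t=\adim(X,Z',t)$; call this common value $A$. Second, because the lines $L_i,L_j'$ are general they are pairwise disjoint, so the Hilbert polynomials add: $H_{X+Z'}=H_X+H_{Z'}$. Unwinding the definitions with these two facts yields the clean identity $u(X,Z',t)=u(X+Z',\varnothing,t)-u(Z',\varnothing,t)$. The only care needed is to stay out of the degenerate branch of the definition of $u$: the hypothesis forces $u(X+Z',\varnothing,t)>0$, hence $A>0$, which gives $\adim(X,Z',t)=A>0$ and, via $[I_{X+Z'}]_t\subseteq[I_{Z'}]_t$, also $\adim(Z',\varnothing,t)\geq A>0$; so all three unexpectedness values are computed by the "otherwise" formula $\adim-\vdim$ and the identity is a genuine equality of those expressions. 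The hypothesis then reads exactly as $u(X,Z',t)>0$.

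For the second conclusion I would show that passing from $Z'$ to $Z$ does not change the degree-$t$ part of the ideal, i.e. $[I_{Z'}]_t=[I_Z]_t$. Working one line at a time, write $Z_i'$ for the component $m_i'L_i'$ and $Z_i$ for whatever replaces it. One inclusion is immediate: a form vanishing to order $m_i'$ along $L_i'$ vanishes to order at least its assigned multiplicity (which is at most $m_i'$) at each chosen point of $L_i'$, so $[I_{Z_i'}]_t\subseteq[I_{Z_i}]_t$. For the reverse inclusion, a degree-$t$ form in $[I_{Z_i}]_t$ vanishes to order $m_i'$ at the (at least $t-m_i'+2$) points of multiplicity exactly $m_i'$, and Proposition \ref{BezoutTypeProp}, applied with $m=m_i'$ and $s\geq t-m_i'+2$, then forces it to vanish to order $m_i'$ along all of $L_i'$; hence $[I_{Z_i}]_t=[I_{Z_i'}]_t$. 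Since $I_{Z'}=\bigcap_i I_{Z_i'}$ and $I_Z=\bigcap_i I_{Z_i}$, intersecting over $i$ gives $[I_Z]_t=[I_{Z'}]_t$. Transporting this equality through the definitions, intersecting with $[I_X]_t$ gives $\adim(X,Z,t)=\adim(X,Z',t)$, and the equality of dimensions gives $\vdim(X,Z,t)=\vdim(X,Z',t)$ (both use the same $H_X$); as $\adim(X,Z,t)>0$ we are again in the "otherwise" branch, so $u(X,Z,t)=u(X,Z',t)>0$ and $(X,Z,t)$ is unexpected.

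I expect the only genuinely substantive step to be the reverse inclusion $[I_{Z_i}]_t\subseteq[I_{Z_i'}]_t$, which is precisely where the numerical threshold $t-m_i'+2$ is needed and where Proposition \ref{BezoutTypeProp} does the real work. Everything else is careful but routine bookkeeping; the main pitfall to watch is the two-case definition of $u$, so I would verify at each stage that the relevant $\adim$ is strictly positive before applying the $\adim-\vdim$ formula.
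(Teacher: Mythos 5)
Your proposal is correct and follows essentially the same route as the paper: the identity $u(X,Z',t)=u(X+Z',\varnothing,t)-u(Z',\varnothing,t)$ (valid once all relevant $\adim$'s are positive, using disjointness of general lines so that Hilbert polynomials add) is exactly what the paper's displayed computation with the $c_{3,1,m,t}$ sums amounts to, and the reduction $[I_Z]_t=[I_{Z'}]_t$ via Proposition \ref{BezoutTypeProp} is the paper's first step. Your write-up is in fact slightly more careful than the paper's, since it explicitly verifies $\adim(Z',\varnothing,t)>0$ before invoking the ``otherwise'' branch of the definition of $u$, a point the paper leaves implicit.
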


\begin{proof}
By Proposition \ref{BezoutTypeProp}, $[I(Z')]_t=[I(Z)]_t$, hence if
$(X,Z',t)$ is unexpected, then so is $(X,Z,t)$.

To prove the rest of the statement of the theorem, say $u(X+Z',\varnothing,t)>u(Z',\varnothing,t)$.
Then $u(X+Z',\varnothing,t)>0$, so $\adim (X,Z',t)=\adim (X+Z',\varnothing,t)>0$.
Moreover,
$$\dim [I_{X+Z'}]_t-\left(\dim [K[\P^3]]_t -\sum_{1\leq i\leq r} c_{3,1,m_i,t}-\sum_{1\leq i\leq s} c_{3,1,m_i',t}\right)$$
$$=u(X+Z',\varnothing,t)>u(Z',\varnothing,t)=\dim [I_{Z'}]_t-\left(\dim [K[\P^3]]_t -\sum_{1\leq i\leq s} c_{3,1,m_i',t}\right)$$
so
$$u(X,Z',t)=\dim [I_{X+Z'}]_t-\left(\dim [I_{Z'}]_t-\sum_{1\leq i\leq r} c_{3,1,m_i,t}\right)>0;$$
i.e., $(X,Z',t)$ is unexpected.
\end{proof}

In order to have examples where we can apply Theorem \ref{extendedThm}, we recall \cite[Theorem 3.3]{DHRST18}:

 \begin{theorem}\label{ex:Singular list}
   Let $X\subset\P^3$ be a union of general lines with multiplicity. Then 
   the triple $(X,\varnothing,t)$ is unexpected in the following cases:
   \begin{itemize}
   \item[A)] $X=X_{1}(3^{\times 4},1^{\times 5})$ and $t=10$ (here $\adim=1$ and $\vdim=-1$);
   \item[B)] $X=X_{1}(4,3^{\times 5})$ and $t=12$ (here $\adim=1$ and $\vdim=-5$);
   \item[C)] $X=X_{1}(3^{\times 6},2)$ and $t=12$ (here $\adim=1$ and $\vdim=-2$); and
   \item[D)] $X=X_{1}(6^{\times 5},1)$ and $t=20$ (here $\adim=1$ and $\vdim=-105$).
   \end{itemize}
\end{theorem}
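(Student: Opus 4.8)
The final statement to prove is Theorem~\ref{ex:Singular list}, which is cited from \cite[Theorem 3.3]{DHRST18}. Since the excerpt explicitly recalls this as a quoted result from another paper, the ``proof'' here is really a verification that the four listed cases are genuinely unexpected, i.e.\ that in each case $\adim(X,\varnothing,t) > \edim(X,\varnothing,t)$. The plan is to check, in each of the four cases (A)--(D), that the virtual dimension $\vdim(X,\varnothing,t)$ is negative (so that $\edim = 0$) while the actual dimension $\adim(X,\varnothing,t)$ is strictly positive (indeed equal to $1$ in all four cases). Because $u(X,\varnothing,t) = \adim - \vdim$ whenever $\adim > 0$, positivity of $\adim$ together with the stated values immediately gives $u(X,\varnothing,t) = \adim - \vdim > 0$, which is exactly unexpectedness.

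First I would compute $\vdim(X,\varnothing,t) = \dim[K[\P^3]]_t - H_X(t)$ for each case. Since the lines are pairwise disjoint general lines in $\P^3$, Lemma~\ref{adv} gives $H_X(t) = \sum_i c_{3,1,m_i,t}$, and the explicit formula $c_{3,1,m,t} = \tfrac{1}{6}m(m+1)(3t+5-2m)$ recorded just before Proposition~\ref{BezoutTypeProp} makes each summand a routine evaluation. Together with $\dim[K[\P^3]]_t = \binom{t+3}{3}$, this reproduces the tabulated values $\vdim = -1, -5, -2, -105$ in cases (A)--(D) respectively. This step is purely arithmetic and presents no conceptual difficulty.

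The substantive step is establishing that $\adim(X,\varnothing,t) \geq 1$, equivalently that there actually exists a nonzero form of degree $t$ vanishing on $X$ with the prescribed multiplicities along each general line. This is where the real content lies: a naive dimension count predicts no such form (since $\vdim < 0$), so one must exhibit or otherwise guarantee one. The hard part will be this existence claim, and it cannot be obtained by Lemma~\ref{adv} alone, since that lemma only bounds $\adim$ from below by $\edim$ when no unexpected behavior occurs. In \cite{DHRST18} these forms are produced by explicit constructions (and their uniqueness and irreducibility established); the cleanest approach available to us is to invoke those computer-assisted constructions, or to present an explicit defining equation in each case and verify directly that it has the required multiplicity along each of the general lines. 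Since the excerpt states that results depending on computer calculations assume characteristic $0$, I would frame the verification accordingly.

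Finally, I would assemble the conclusion: in each of (A)--(D), having shown $\vdim < 0$ and $\adim = 1 > 0$, the definition of $u$ gives $u(X,\varnothing,t) = \adim(X,\varnothing,t) - \vdim(X,\varnothing,t) = 1 - \vdim > 0$, so $(X,\varnothing,t)$ is unexpected in each case. The only genuine obstacle is the existence of the unexpected form in each case; once that is granted (as it is in \cite{DHRST18}), the remainder is a direct substitution into the formula for $c_{3,1,m,t}$ and the definition of unexpectedness.
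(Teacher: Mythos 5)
Your proposal is correct and takes essentially the same approach as the paper: the paper gives no proof of this statement at all, simply recalling it as Theorem 3.3 of \cite{DHRST18}, which is exactly where you place the substantive content (the existence of the degree-$t$ forms giving $\adim=1$). Your supplementary arithmetic check of the virtual dimensions via $c_{3,1,m,t}=\tfrac{1}{6}m(m+1)(3t+5-2m)$ does reproduce the tabulated values $-1,-5,-2,-105$, so nothing is missing.
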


Since for $t\geq m$ and a linear variety $L$, we have (by Lemma \ref{adv}) that $u(mL,\varnothing,t)=0$, we get many new examples 
of unexpected triples by applying Theorem \ref{extendedThm} to Theorem \ref{ex:Singular list}
in case $Z'$ is a single line in $\P^3$.

\begin{example}
Consider general lines $L_{1,1},\ldots,L_{1,4},L_{2,1},\ldots,L_{2,5}$ in $\P^3$.
Let $X=3L_{1,1}+\cdots+3L_{1,4}+L_{2,1}+\cdots+L_{2,4}$, $Z'=L_{2,5}$ and
let $Z$ be any 11 or more points of $L_{2,5}$ of multiplicity 1.
Then by Theorem \ref{ex:Singular list}(A) we have $2=\adim-\vdim=u(X+Z',\varnothing,10)>u(Z',\varnothing,10)=0$, so
$(X,Z,10)$ and $(X,Z',10)$ are unexpected by Theorem \ref{extendedThm}.

In more detail, $(X,Z',10)$ is unexpected since $\dim [I(Z')]_{10}=\binom{10+3}{3}-c_{3,1,1,10}$ by Lemma \ref{adv} 
and $\binom{13}{3}-4c_{3,1,3,10}-5c_{3,1,1,10}=-1$, so we get
$$\dim [I(X)\cap I(Z')]_{10}=\dim [I(X(3^4,1^5))]_{10}=1 >0=\max\left(0,\binom{13}{3}-4c_{3,1,3,10}-5c_{3,1,1,10}\right)$$
$$=\max\left(0,\left(\binom{13}{3}-c_{3,1,1,10}\right)-4c_{3,1,3,10}-4c_{3,1,1,10}\right)$$
$$=\max\left(0,\dim [I(Z')]_{10}-4c_{3,1,3,10}-4c_{3,1,1,10}\right).$$ 
\end{example}

\begin{example}
Consider general lines $L_{1,1},\ldots,L_{1,4},L_{2,1},\ldots,L_{2,5}$.
Let $X=3L_{1,1}+3L_{1,2}+3L_{1,3}+L_{2,1}+\cdots+L_{2,5}$, $Z'=L_{1,4}$ and
let $Z$ be any 9 or more points of $L_{1,4}$ of multiplicity at most 3, where at least 9 of the points have multiplicity exactly 3.
Then as before, by Theorem \ref{ex:Singular list}(A) and Theorem \ref{extendedThm},
$(X,Z,10)$ and $(X,Z',10)$ are unexpected.
\end{example}

The preceding two examples work by replacing one of a set of lines coming from the schemes listed in Theorem \ref{ex:Singular list} by fat points.
Using the following proposition we can extend the construction given in the examples above by replacing more of the lines with points in a suitable way.

\begin{proposition}\label{extendedProp}
Let $L_1,\ldots,L_r,L_1',\ldots,L_s'$ be general lines in $\P^3$.
Let $(X+Z',\varnothing,t)$ correspond to one of the systems (A), (B), (C) or (D)  listed in Theorem \ref{ex:Singular list},
where $X=m_1L_1+\cdots+m_rL_r$ and $Z'=m_1'L_1'+\cdots+m_s'L_s'$
(so $X+Z'$ is a partition of one of the fat line schemes enumerated in the theorem). 
Then $u(X+Z',\varnothing,t)>u(Z',\varnothing,t)=0$ (and hence $(X,Z',t)$ and
$(X,Z,t)$ as given in Theorem \ref{extendedThm} are unexpected),
unless $X=\varnothing$ or, in case (A), $Z'$ includes all four lines of multiplicity 3, or,
in case (D), $Z'$ includes four or more lines of multiplicity 6.
\end{proposition}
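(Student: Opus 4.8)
The plan is to separate the proposition into an easy \emph{a priori} part and a finite computational core. The easy part: by Theorem~\ref{ex:Singular list}, in each of the four cases the full scheme $X+Z'$ satisfies $\adim(X+Z',\varnothing,t)=1>0$ and $\vdim(X+Z',\varnothing,t)=v$ for a fixed negative integer $v$ (namely $v=-1,-5,-2,-105$ in cases (A)--(D)), so by definition $u(X+Z',\varnothing,t)=1-v>0$. Hence the asserted strict inequality $u(X+Z',\varnothing,t)>u(Z',\varnothing,t)$ follows the instant we establish the single equality $u(Z',\varnothing,t)=0$, and the unexpectedness of $(X,Z',t)$ and of $(X,Z,t)$ is then immediate from Theorem~\ref{extendedThm}. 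Thus the whole content of the proposition is the claim that, away from the listed exceptions, the general lines comprising $Z'$ (each carrying its prescribed multiplicity) impose independent conditions on $[K[\P^3]]_t$, i.e. $\adim(Z',\varnothing,t)=\vdim(Z',\varnothing,t)\ge 0$.

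The structural tool I would use is a monotonicity principle: if a collection of general lines with multiplicities imposes independent conditions on $[K[\P^3]]_t$, then so does every sub-collection. Indeed, imposing independent conditions says precisely that the $\sum_i c_{3,1,m_i',t}$ linear functionals expressing the vanishing conditions are linearly independent on $[K[\P^3]]_t$; a subset of an independent set is independent, and discarding lines only raises $\vdim$, so it remains $\ge 0$. Since the lines are general, any sub-collection of a general configuration is again a general configuration of the smaller type, so it is enough to verify $u(\cdot,\varnothing,t)=0$ for the \emph{maximal} non-exceptional $Z'$ in each case.

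Recalling that the partition keeps each line with its full multiplicity, the maximal non-exceptional configurations to be checked are:
\begin{itemize}
\item case (A), $t=10$: $Z'\subseteq X_1(3^{\times 3},1^{\times 5})$ (at most three triple lines, plus all five simple lines);
\item case (B), $t=12$: $Z'\subseteq X_1(3^{\times 5})$ or $Z'\subseteq X_1(4,3^{\times 4})$;
\item case (C), $t=12$: $Z'\subseteq X_1(3^{\times 6})$ or $Z'\subseteq X_1(3^{\times 5},2)$;
\item case (D), $t=20$: $Z'\subseteq X_1(6^{\times 3},1)$ (at most three sextuple lines).
\end{itemize}
In each instance the line(s) forced out of $Z'$ guarantee $X\neq\varnothing$, matching that hypothesis. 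A direct count via Lemma~\ref{adv} shows each listed configuration has $\vdim\ge 0$ (for example $\vdim(X_1(3^{\times 3},1^{\times 5}),\varnothing,10)=\binom{13}{3}-3c_{3,1,3,10}-5c_{3,1,1,10}\ge 0$), so all that is left is to confirm $\adim$ equals this $\vdim$ in each case; I would verify these equalities by the CoCoA computation in characteristic $0$, exactly as Theorem~\ref{ex:Singular list} itself was obtained in \cite{DHRST18}. The monotonicity principle then propagates each equality to \emph{all} sub-collections, i.e. to every non-exceptional $Z'$, which finishes the proof.

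The main obstacle is precisely this last, computational, step. General fat lines in $\P^3$ do \emph{not} always impose independent conditions---that failure is the whole source of the unexpected systems in Theorem~\ref{ex:Singular list}---so there is no general theorem to invoke and independence must be checked for the finitely many maximal configurations directly. I would also confirm that the exceptional families in cases (A) and (D) genuinely contain a sub-collection for which the analogous computation returns $u>0$ (e.g. that $X_1(3^{\times 4})$ at $t=10$, and $X_1(6^{\times 4})$ at $t=20$, already fail independence), so that the ``unless'' clause is sharp and the strict inequality really can break there.
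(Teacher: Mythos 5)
Your proposal is correct, and its skeleton is the same as the paper's: observe that Theorem \ref{ex:Singular list} already gives $u(X+Z',\varnothing,t)\geq 2>0$ in every case, so the entire content is the single equality $u(Z',\varnothing,t)=0$, which must ultimately be certified by a computer calculation. The difference is in how much computing you do. The paper's proof simply checks $\adim(Z',\varnothing,t)=\edim(Z',\varnothing,t)$ for \emph{every} non-exceptional $Z'$, using Macaulay2 on random lines together with semicontinuity to pass from a random to a general configuration. You interpose a monotonicity lemma --- if a collection of fat lines imposes independent conditions in degree $t$, so does every sub-collection, since a subfamily of a direct sum of condition spaces is still direct --- which collapses the verification to the handful of maximal non-exceptional configurations ($X_1(3^{\times 3},1^{\times 5})$ at $t=10$; $X_1(4,3^{\times 4})$ and $X_1(3^{\times 5})$ at $t=12$; $X_1(3^{\times 6})$ and $X_1(3^{\times 5},2)$ at $t=12$; $X_1(6^{\times 3},1)$ at $t=20$). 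This is a genuine refinement: it cuts the number of computations from dozens to six and isolates a reusable principle. Two points of care, both of which you handle but which deserve emphasis. First, monotonicity propagates the property ``$\adim=\vdim\geq 0$'' (independent conditions), \emph{not} the weaker property ``$u=0$'': a configuration with $\adim=0$ and $\vdim<0$ has $u=0$ yet can contain unexpected sub-configurations (indeed $X+Z'$ itself is such an example in each case). So your reduction is only valid because every maximal non-exceptional configuration has $\vdim\geq 0$, which you rightly verify via Lemma \ref{adv} before invoking the computer. Second, a computation can only ever certify a specific (random) choice of lines; the passage to general lines needs the semicontinuity argument that the paper states explicitly and that you leave implicit --- you should spell it out, since for the upper bound $\adim(\hbox{general})\leq\adim(\hbox{random})$ combined with $\adim\geq\edim$ is exactly what turns a random check into a statement about general lines. (Your final remark on sharpness of the exceptional cases is not needed for the proposition as stated, though the paper's Examples \ref{cubocubicEx} and \ref{NiceLineExample} confirm it.)
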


\begin{proof}
It is enough in each case to show that $u(Z',\varnothing,t)=0$.
This is equivalent to showing $\adim (Z',\varnothing,t)=\edim (Z',\varnothing,t)$,
for which it is sufficient by semi-continuity to replace the general lines $L_i'$
by any choice of lines such that we get $\adim (Z',\varnothing,t)=\edim (Z',\varnothing,t)$.
So using Macaulay2 \cite{M2} we checked $\adim (Z',\varnothing,t)=\edim (Z',\varnothing,t)$ for each $Z'$
using random lines.
\end{proof}


\begin{example}
Consider general lines $L_1,\ldots,L_6$.
Let $X=4L_1$, $Z'=3L_2+\cdots+3L_6$ and
let $Z$ be any 55 or more points of $L_2\cup\cdots\cup L_6$, each of multiplicity at most 3, with at least 11 points
of multiplicity 3 on each line $L_2,\ldots,L_6$.
By Proposition \ref{extendedProp}(B), we have $u(X+Z',\varnothing,12)>u(Z',\varnothing,12)$ 
so $(X,Z',12)$ and $(X,Z,12)$ are unexpected. 
(In fact, in this case, as noted in the proof of
Proposition \ref{extendedProp} for the case  (B), $u(Z',\varnothing,12)=0$, but 
$u(X+Z',\varnothing,12)=6$ by Theorem \ref{ex:Singular list}(B).)
\end{example}

\begin{example}\label{cubocubicEx}
We now present an example where 
$u(X+Z',\varnothing,t)>u(Z',\varnothing,t)=0$
but $X+Z'$ is not one of the cases listed in Theorem \ref{ex:Singular list}.
Let $X=X(3^{\times3})=3L_1+3L_2+3L_3$ and $Z'=3L_4$.
Since $Z'$ is a single fat line, we have $u(Z',\varnothing,10)=0$.
We claim $u(X+Z',\varnothing,10)=2$; given this, we have 
$u(X+Z',\varnothing,10)>u(Z',\varnothing,10)$ so by Theorem \ref{extendedThm}
$(X,Z',10)$ and $(X,Z,10)$ are unexpected.

To justify $u(X,\varnothing,10)=2$, note that by 
\cite{HarHir81} general lines of multiplicity 1 impose independent conditions on forms of degree $d$
when the virtual dimension is nonnegative for that degree, so we get $\adim(X(1^{\times4}),\varnothing,6)=56$ and 
$u(X_1(1^{\times4}),\varnothing,6))=0$. 

We now apply a certain cubo-cubic Cremona transformation (where the terminology cubo means that
that the transformation is defined by degree 3 forms, and the terminology cubic means the inverse 
transformation is also defined by degree 3 forms),
given by the linear system of cubics containing 4 general lines in $\P^3$. (This map is a special 
case of a Veneroni Cremona transformation, described below.)
It converts $(X_1(1^{\times4}),\varnothing,6)$ into $(X_1(3^{\times4}),\varnothing,10)$ (see \cite{DHRST18} or Example \ref{ExP3}),
so $\adim(X_1(3^{\times4}),\varnothing,10)=\adim(X(1^{\times4}),\varnothing,6)=56$ but now $\vdim(X_1(3^{\times4}),\varnothing,10)=54$
so $u(X+Z',\varnothing,10)=u(X_1(3^{\times4}),\varnothing,10)=2$.
\end{example}

\begin{example}\label{NiceLineExample}
	This example uses a linear system of surfaces of degree $20$,
	with four lines of multiplicity $6$. The unexpectedness in our example satisfies
$$u(X+Z',\varnothing,t)>u(Z',\varnothing,t)>0.$$

We have the following facts (justified below):
\begin{enumerate}
\item[(a)] $\adim(X_1(6^{\times5},1),\varnothing,20))=1$, $\vdim(X_1(6^{\times5},1),\varnothing,20))=-105$;
\item[(b)] $6\leq \adim(X_1(6^{\times5}),\varnothing,20))\leq16$, $\vdim(X_1(6^{\times5}),\varnothing,20))=-84$; and
\item[(c)] $\adim(X_1(6^{\times4}),\varnothing,20))=307$, $\vdim(X_1(6^{\times4}),\varnothing,20))=287$.
\end{enumerate}

Thus we have 
$$106=u(X_1(1)+X_1(6^{\times5}),\varnothing,20)>100\geq u(X_1(6^{\times5}),\varnothing,20)\geq 90,$$
so by Theorem \ref{extendedThm} we see that $(X_1(1),X_1(6^{\times5}),20)$ is unexpected.
In a similar but less surprising way, we see that 
$$100\geq u(X_1(6)+X_1(6^{\times4}),\varnothing,20)\geq90 >u(X_1(6^{\times4}),\varnothing,20)=20,$$
so by Theorem \ref{extendedThm} we see that 
$(X_1(6),X_1(6^{\times4}),20)$ is unexpected.

We now briefly justify the actual and virtual dimensions given above.

For (a), the triple 
$$(X_1(1^{\times5},6),\varnothing,8)$$ 
is expected since $\vdim((X_1(1^{\times5},6),\varnothing,8))=1$ and
we checked by computer that 
$$\adim(X_1(1^{\times5},6),\varnothing,8)=1.$$
As explained in \cite{DHRST18}, a Cremona transformation
due to J.~Todd (given by surfaces of degree $19$, passing through $5$ general lines with 
multiplicity $5$) converts $(X_1(1^{\times5},6),\varnothing,8)$ into $(X_1(6^{\times5},1),\varnothing,20)$.
Thus $\adim(X_1(6^{\times5},1),\varnothing,20)=1$ but now $\vdim(X_1(6^{\times5},1),\varnothing,20)=-105$.

For (b) we have $(X_1(6^{\times5}),\varnothing,20))$ which by applying 
the Todd transformation tranforms into
$(X_1(4^{\times5},10),\varnothing,20))$.
Note that $(X_1(4^{\times5},10),\varnothing,20))$ is obtained from $(X_1(2^{\times5},5),\varnothing,10))$ by 
doubling both the degree and 
the multiplicities. We get $\edim(X_1(2^{\times5},5),\varnothing,10)=6$ whereas by computer
(using random lines) we get $\adim(X_1(2^{\times5},5),\varnothing,10)=6$,
and hence $\adim(X_1(2^{\times5},5),\varnothing,10)=6$ holds for general lines.
Thus $\adim(X_1(6^{\times5}),\varnothing,20))\geq6$. Likewise, by computer 
(using random lines) we get $\adim(X_1(4^{\times5},10),\varnothing,20))=16$
and thus $\adim(X_1(4^{\times5},10),\varnothing,20))\leq16$ holds for general lines.

Finally, for (c) we have $\adim(X_1(2^{\times4}),\varnothing,12)=307$
(by computer, using random lines, semicontinuity and the fact that $\vdim(X_1(2^{\times4}),\varnothing,12)=307$).
A cubo-cubic Cremona transformation 
converts $(X_1(2^{\times4}),\varnothing,12)$ into $(X_1(6^{\times4}),\varnothing,20)$.
Thus $\adim(X_1(6^{\times4}),\varnothing,20)=307$ but $\vdim(X_1(6^{\times4}),\varnothing,20)=287$.
\end{example}

\section{Veneroni Cremona transformations}\label{Veneroni}

Up to now, we have considered unexpectedness only with respect to vanishing on 
linear spaces that are disjoint. To relax this condition, we look to
Veneroni maps. These are Cremona transformations of $\P^n$ studied by Veneroni \cite{veneroni}
(see \cite{Todd1930} for $n=4$, and also \cite{Blanch}, \cite{Snyder2} and \cite{DFHST} for additional exposition). 
Here we use them to give some examples of unexpectedness
with respect to vanishing on linear spaces that are not disjoint, thereby showing that the notion of
unexpectedness does not only make sense when the linear spaces are disjoint.

We recall how Veneroni maps are constructed.
The linear system of all degree $n$ forms vanishing on $n+1$ general linear subspaces 
$\Pi_1,\ldots  ,\Pi_{n+1}$ of codimension $2$ in $\P^n$ has dimension $n+1$ \cite{veneroni,DFHST}. Thus this linear system defines a
rational map $v_n:\P^n\dashrightarrow\P^n$, called the Veneroni map. 
In fact, $v_n$ is birational and its inverse is also a Veneroni map, defined by 
the degree $n$ forms vanishing on $n+1$ codimension $2$ linear subspaces $\Pi'_1,\ldots  ,\Pi'_{n+1}$ \cite{veneroni,DFHST}.
The base locus of $v_n$ consists of all the $\Pi_j$ together with the variety $R_n$ (also of codimension $2$) of 
all common transversal lines (i.e., all lines which intersect each $\Pi_j$) \cite{Snyder2,DFHST}.

For example, the quadratic Cremona transformation is obviously a Veneroni transformation $v_2$ on $\P^2$.
Recall that a Cremona transformation given by forms of degree $d$ whose inverse is given by forms of degree
$r$ is called a $d$-o--$r$-ic Cremona transformation. Thus
$v_3$ is a cubo-cubic Cremona transformation on $\P^3$ 
(which we applied above in Examples \ref{cubocubicEx} and \ref{NiceLineExample}) and 
$v_4$ is a quarto-quartic Cremona transformation on $\P^4$ (one such was used in \cite{Todd1930}).
 
To describe the map $v_n$ and its inverse, we will use the expression $dH-m_1\Pi_2-\cdots -m_{n+1}\Pi_{n+1}$
to denote the linear system of all forms of degree $d$ that vanish to order at least $m_i$ on $\Pi_i$ for each $i$.
So consider $v_n:\P^n_1\dashrightarrow\P^n_2$ and $v_n^{-1}:\P^n_2\dashrightarrow\P^n_1$, 
where we use subscripts to distinguish the source and target of $v_n$.
Let $H$ denote the linear system of all hyperplanes (i.e., linear forms) on $\P^n_1$
and let $H'$ denote the linear system of all hyperplanes on $\P^n_2$. 
Then $H'$ pulls back under $v_n$ to $nH-\Pi_1-\cdots -\Pi_{n+1}$. We indicate this by writing
$$H'=nH-\Pi_1-\cdots  -\Pi_{n+1}.$$
Moreover, the linear systems $(n-1)H-\Pi_1-\cdots  -\Pi_{n+1}+\Pi_j$ each have a unique
member, and this hypersurface is the inverse image under $v_n$ of $\Pi'_j$. We indicate this by writing
$$\Pi'_1=(n-1)H-\Pi_2-\cdots  -\Pi_{n+1},$$
$$\Pi'_2=(n-1)H-\Pi_1-\Pi_3-\cdots  -\Pi_{n+1},$$
$$\ldots$$
$$\Pi'_1=(n-1)H-\Pi_1-\cdots  -\Pi_{n}.$$
The inverse image under $v_n$ of $R_n'$ is $R_n$.
Likewise, with respect to $v_n^{-1}$ we have 
$$H=nH'-\Pi'_1-\cdots  -\Pi'_{n+1},$$
$$\Pi_1=(n-1)H'-\Pi'_2-\cdots  -\Pi'_{n+1},$$
$$\Pi_2=(n-1)H'-\Pi'_1-\Pi'_3-\cdots  -\Pi'_{n+1},$$
$$\ldots$$
$$\Pi_{n+1}=(n-1)H'-\Pi'_1-\cdots  -\Pi'_{n}.$$

We saw in Example \ref{cubocubicEx} for $n=3$ that by applying a Veneroni transformation to $S=6H-\Pi_1-\cdots-\Pi_4$
on $\P^3$ that $S'=v_3^{-1}(S)=10H-3(\Pi_1+\cdots+\Pi_4)$
is unexpected, even though the linear system $S$ had the expected dimension. 
We saw similar behavior in Example \ref{NiceLineExample} with respect to a different kind of Cremona transformation.
We now show some additional examples using Veneroni transformations that suggest this behavior may be fairly widespread.

\begin{example}\label{ExP3}
Take the linear system of surfaces $S=7H-\Pi_1-\cdots-\Pi_4$ in $\P^3$ of degree 7 vanishing on 7 general lines $\Pi_j$. 
The expected number of conditions imposed on forms of degree 7 by vanishing on $\Pi_1+\cdots+\Pi_4$ is 
$4c_{3,1,1,7}=32$.
Thus we get $\vdim(S)=\binom{7+3}{3}-32=88$, which is equal to $\adim(S)$ by \cite{HarHir81}.
In particular, the linear system $S$ has the expected dimension, but we now use a Veneroni transformation
to obtain a linear system $S'$ giving an unexpected hypersurface.

Pulling $S$ back by $v_3^{-1}$ gives the linear system 
$$S'=7(3H-\Pi_1-\cdots  -\Pi_4)-(2H-\Pi_2-\Pi_3-\Pi_4)-\cdots  -(2H-\Pi_1-\Pi_2-\Pi_3)=$$
$$(21-8)H-(7-3)\Pi_1-\cdots-(7-3)\Pi_4=13H-4\Pi_1-\cdots  -4\Pi_4.$$

Each $4\Pi_j$ imposes $c_{3,1,4,13}=120$ conditions on forms of degree 13, for a total of $4c_{3,1,4,13}=480$. 
Thus $u(S',\varnothing,13)=\adim(S')-\vdim(S')=\adim(S)-\vdim(S')=88 - (\binom{13+3}{4}-480)=88-(560-480)=8$, so $S'$ is unexpected.
\end{example}

We now want to consider similar cases for $n=4$ and $n=5$, but here the linear spaces $\Pi_j$ are not disjoint
so computing Hilbert polynomials is more involved. Suppose we have two schemes, $A$ and $B$, in $\P^n$, defined by homogeneous ideals
$I$ and $J$ in the homogeneous coordinate ring $R$ of $\P^n$. Let $C$ be the scheme defined by $I+J$
(this ideal need not be saturated even if $I$ and $J$ are, but this doesn't affect computing Hilbert polynomials).
Then we have the Mayer-Vietoris exact sequence
$$0\to \frac{R}{I\cap J}\to \frac{R}{I}\oplus \frac{R}{J}\to \frac{R}{I+J}\to 0$$
so we have $H_{A\cup B}=H_A+H_B-H_C$.
In the examples that follow, we want to compute the Hilbert polynomial for
$m_1\Pi_1+\dots+m_r\Pi_r$ in cases where $\Pi_i\cap\Pi_j$ is nonempty for 
$i\neq j$ but all triple intersections $\Pi_i\cap\Pi_j\cap\Pi_k$
are empty for $i\neq k\neq j$. We can do this recursively by taking 
$A=m_1\Pi_1$ and $B=m_2\Pi_2+\dots+m_r\Pi_r$, 
noting that triple intersections being empty implies
$C=(m_1\Pi_1\cap m_2\Pi_2)+\dots+(m_1\Pi_1\cap m_r\Pi_r)$.
Thus for $X=m_1\Pi_1+\dots+m_r\Pi_r$ we get recursively that
$$H_X=\sum_iH_{m_i\Pi_i}-\sum_{i\neq j}H_{m_i\Pi_i\cap m_j\Pi_j}.$$
 
\begin{example}\label{ExP4}
Take the hypersurface $S=7H-\Pi_1-\cdots-\Pi_5$ in $\P^4$. Because each pair of spaces $\Pi_i$ and $\Pi_j$
intersect in a point and there are $\binom{5}{2}$ such points, 
the expected number of conditions imposed on forms of degree 7 by vanishing on $\Pi_1+\cdots+\Pi_5$ is not 
$5c_{4,2,1,7}$, but rather $5c_{4,2,1,7}-\binom{5}{2}$.
Thus we get $\vdim(S)=\binom{7+4}{4}-5\binom{7+2}{2}+10=160$, which is equal to $\adim(S)$ (checked by Singular and Macaulay2),
hence as before $S$ has the expected dimension.

Pulling $S$ back by $v_4^{-1}$ gives the linear system 
$$S'=7(4H-\Pi_1-\cdots  -\Pi_5)-(3H-\Pi_2-\Pi_3-\Pi_4-\Pi_5)-\cdots  -(3H-\Pi_1-\Pi_2-\Pi_3-\Pi_4)=$$
$$(28-15)H-(7-4)\Pi_1-\cdots  -(7-4)\Pi_5=13H-3\Pi_1-\cdots  -3\Pi_5.$$

Each $3\Pi_j$ imposes $c_{4,2,3,13}=521$ conditions on forms of degree 13, for a total of $5c_{4,2,3,13}=2605$. But the 
nonempty intersection of each $3\Pi_j$ and $3\Pi_i$ reduces this by 36,
this being the (computer calculated) value of the Hilbert polynomial of 
$I_{3\Pi_j} + I_{3\Pi_i}$ in degree 13, giving $2605-\binom{5}{2}36=2245$ conditions.
(This also agrees with the value of the Hilbert polynomial of $I_{3\Pi_1+\cdots+3\Pi_5}$ in degree 13,
computed by Singular and Macaulay2 in a large positive characteristic.)
Thus $u(S',\varnothing,13)=\adim(S')-\vdim(S')=\adim(S)-\vdim(S')=160 - (2380-2245)=25$, so $S'$ is unexpected.
\end{example}

\begin{example}\label{ExP4b}
Now take the hypersurface $S=8H-\Pi_1-\cdots-\Pi_5$ in $\P^4$. 
The expected number of conditions imposed on forms of degree 8 by vanishing on $\Pi_1+\cdots+\Pi_5$ is 
$5c_{4,2,1,8}-\binom{5}{2}=5\binom{8+2}{2}-10=215$.
Thus we get $\vdim(S)=\binom{8+4}{4}-5\binom{8+2}{2}+10=280$, which is equal to $\adim(S)$ (checked by Singular and Macaulay2).

Pulling $S$ back by $v_4^{-1}$ gives the linear system 
$$S'=8(4H-\Pi_1-\cdots  -\Pi_5)-(3H-\Pi_2-\Pi_3-\Pi_4-\Pi_5)-\cdots  -(3H-\Pi_1-\Pi_2-\Pi_3-\Pi_4)=$$
$$17H-4\Pi_1-\cdots  -4\Pi_5.$$

Each $4\Pi_j$ imposes $c_{4,2,4,17}=1365$ conditions on forms of degree 17, for a total of $5c_{4,2,4,17}=6825$. But the 
nonempty intersection of each $4\Pi_j$ and $4\Pi_i$ reduces this by 100,
this being the (computer calculated) value of the Hilbert polynomial of 
$I_{4\Pi_j} + I_{4\Pi_i}$ in degree 17, giving $6825-\binom{5}{2}100=5825$ conditions.
(This also agrees with the value of the Hilbert polynomial of $I_{3\Pi_1+\cdots+3\Pi_5}$ in degree 17,
computed by Singular and Macaulay2 in a large positive characteristic.)
Thus $u(S',\varnothing,17)=\adim(S')-\vdim(S')=\adim(S)-\vdim(S')=280 - (\binom{17+4}{4}-5825)=280-(5985-5825)=280-160=120$, 
so $S'$ is unexpected.
\end{example}

\begin{example}\label{ExP5}
Take the hypersurface $S=8H-\Pi_1-\cdots-\Pi_6$ in $\P^5$. Again each pair of spaces $\Pi_i$ and $\Pi_j$
intersect, this time in a line, the expected number of conditions imposed on forms of degree 8 by vanishing on $\Pi_1+\cdots+\Pi_6$ is not 
$6c_{5,3,1,8}$, but rather $6c_{5,3,1,8}-\binom{6}{2}c_{5,1,1,8}=855$. This is confirmed using Singular \cite{Singular} 
to compute the Hilbert polynomial 
of the ideal of 6 random codimension 2 linear spaces, thus $\vdim(S)=\binom{5+8}{8}-855=432$. (Singular also gives
$\adim(S)=432$.)

Pulling $S$ back by $v_5^{-1}$ gives the linear system 
$$S'=8(5H-\Pi_1-\cdots  -\Pi_6)-(4H-\Pi_2-\Pi_3-\Pi_4-\Pi_6)-\cdots  -(4H-\Pi_1-\Pi_2-\Pi_3-\Pi_5)=$$
$$16H-3\Pi_1-\cdots  -3\Pi_6.$$

The value of the Hilbert polynomial of the ideal of $3\Pi_1+\cdots  +3\Pi_6$ in degree 16 is
$6c_{5,3,3,16}-\binom{6}{2}516=20106$, since 516 is the  (computer calculated) value of the Hilbert polynomial of 
$I_{3\Pi_j} + I_{3\Pi_i}$ in degree 16,
so we get $\vdim(S')=\binom{16+5}{5}-20106=243$.
But $\adim(S')=\adim(S)=432$, so we have $u(S',\varnothing,13)=\adim(S')-\vdim(S')=\adim(S)-\vdim(S')=432-243=189$, 
so $S'$ is unexpected.
\end{example}

\begin{example}\label{ExP5b}
Take the hypersurface $S=9H-\Pi_1-\cdots-\Pi_6$ in $\P^5$. Each pair of spaces $\Pi_i$ and $\Pi_j$
intersect in a line, so the expected number of conditions imposed on forms of degree 9 by vanishing on $\Pi_1+\cdots+\Pi_6$ is 
$6c_{5,3,1,9}-\binom{6}{2}c_{5,1,1,9}=1170$. This is confirmed using Macaulay2.
Thus $\vdim(S)=\binom{5+9}{9}-1170=832$. (Macaulay2 also gives $\adim(S)=832$.)

Pulling $S$ back by $v_5^{-1}$ gives the linear system 
$$S'=9(5H-\Pi_1-\cdots  -\Pi_6)-(4H-\Pi_2-\Pi_3-\Pi_4-\Pi_6)-\cdots  -(4H-\Pi_1-\Pi_2-\Pi_3-\Pi_5)=$$
$$21H-4\Pi_1-\cdots  -4\Pi_6.$$

The value of the Hilbert polynomial of the ideal of $4\Pi_1+\cdots  +4\Pi_6$ in degree 21 is
$6c_{5,3,4,21}-\binom{6}{2}1800=66036$, since 1800 is the  (computer calculated) value of the Hilbert polynomial of 
$I_{4\Pi_j} + I_{4\Pi_i}$ in degree 21. Hence $\vdim(S')=\binom{21+5}{5}-66,036=-256$. But $\adim(S')=\adim(S)=832$, so we have $u(S',\varnothing,21)=\adim(S')-\vdim(S')=\adim(S)-\vdim(S')=832+256=1088$,  so $S'$ is unexpected.
\end{example}

\begin{remark}
For convenience we used computer calculations in the examples above.
Here we demonstrate how to avoid using the computer by computing the Hilbert polynomial of the ideal of $4\Pi_1+\cdots  +4\Pi_6$
from Example \ref{ExP5b}. 
Let $I_1,\dots,I_6$ be the corresponding ideals, so each is generated by two general linear forms. By Lemma~\ref{adv} and a 
calculation, we see that the Hilbert polynomial of $R/I_i^4$ is  $\frac{5}{3} t^3 + \frac{10}{3} t + 1$ for each $1 \leq i \leq 6$. In 
particular the scheme $4 \Pi$ has degree 10. 

Notice that the intersection of any two $\Pi_i$ is a line, and the intersection of any three is empty. By \cite{MNP} Lemma~4.2, this means that the schemes meet ``very properly" (in the language of that paper), and so by Corollary 4.6 of that paper, the degree of the scheme-theoretic intersection of any two of the schemes $4 \Pi_i$ has degree $10^2 = 100$ and is supported on a line, and its saturated ideal is $I_{\Pi_1}^4 + I_{\Pi_2}^4$. 

We now want to find the Hilbert polynomial of this scheme, say $4 \Pi_1 \cap 4 \Pi_2$. Without loss of generality say $I_{\Pi_1} = (x_0,x_1)$ and $I_{\Pi_2} = (x_2,x_3)$. The artinian reduction of $R/I_{\Pi_1}^4$ can be taken to be $K[x_0,x_1]/(x_0,x_1)^4$ and analogously for $\Pi_2$, and the $h$-vectors are $(1,2,3,4)$. The tensor product of these two artinian rings is $K[x_0,x_1,x_2,x_3]/((x_0,x_1)^4 + (x_2,x_3)^4)$, which is the artinian reduction of $R/(I_{\Pi_1}^4 + I_{\Pi_2}^4)$. One can compute the $h$-vector to be $(1, 4, 10, 20, 25, 24, 16)$. Then ``integrating" this twice we obtain the Hilbert function of $4\Pi_1 \cap 4 \Pi_2$ to be the sequence
\[
1, 6, 21, 56, 116, 200, 300, 400, ... , 100t - 300
\]
so the Hilbert polynomial of $4\Pi_1 \cap 4 \Pi_2$ is $100t - 300$ (confirming the degree to be 100). 

From the Mayer-Vietoris sequence
\[
0 \rightarrow R/(I \cap J) \rightarrow R/I \oplus R/J \rightarrow R/(I+J) \rightarrow 0
\]
we see that the Hilbert polynomial of $4 \Pi_1 + 4 \Pi_2$ is 
\[
2 \left [ \frac{5}{3} t^3 + \frac{10}{3} t + 1 \right ] - [100 t - 300] = \frac{10}{3} t^3 - \frac{280}{3} t + 302.
\]
Putting together all six components, and recalling that no two of the intersections meet, we 
obtain the Hilbert polynomial of $4 \Pi_1 + \dots + 4 \Pi_6$ to be
\[
6 \left [ \frac{5}{3} t^3 + \frac{10}{3} t + 1 \right ] - 15 [100 t - 300] = 10t^3 - 1480t + 4506.
\]
In particular, consider $t = 21$. We obtain the value of this Hilbert polynomial to be 
\[
10(21)^3 - 1480(21) + 4506 = 66036.
\]
\end{remark}

\begin{remark}
We have seen by applying a Veneroni transformation to $S=(n+3)H-\Pi_1-\cdots-\Pi_{n+1}$
on $\P^n$ that $S'=v_n^{-1}(S)=(3n+1)H-3(\Pi_1+\cdots+\Pi_{n+1})$
is unexpected for 
$n=3$ (Example \ref{cubocubicEx}),
$n=4$ (Example \ref{ExP4}) and
$n=5$ (Example \ref{ExP5}), and for
$S=(n+4)H-\Pi_1-\cdots-\Pi_{n+1}$ that
$S'=v_n^{-1}(S)=(3n+2)H-4(\Pi_1+\cdots+\Pi_{n+1})$ is unexpected for
$n=3$ (Example \ref{ExP3}),
$n=4$ (Example \ref{ExP4b}) and
$n=5$ (Example \ref{ExP5b}).
It would be interesting to understand
more generally for which values of $t$ and $n$ the system $v_n^{-1}(tH-\Pi_1-\cdots-\Pi_{n+1})$ is unexpected.
\end{remark}

\end{document}